\tikzset{node distance=2cm, auto}
\newtheorem{theorem}{Theorem}[section]
\newtheorem{proposition}[theorem]{Proposition}
\newtheorem{lemma}[theorem]{Lemma}
\newtheorem{corollary}[theorem]{Corollary}
\theoremstyle{definition}
\newtheorem{definition}[theorem]{Definition}
\theoremstyle{remark}
\newtheorem{example}[theorem]{Example}
\newtheorem{remark}[theorem]{Remark}
\def\<{\langle}
\def\>{\rangle}
\def\RR{\mathbb{R}}
\def\CC{\mathbb{C}}
\def\ZZ{\mathbb{Zie95}}
\def\PP{\mathbb{P}}
\def\H{\mathcal{H}}
\def\H{\mathcal{H}}
\def\N{\mathcal{N}}
\def\cX{\check{X}}
\def\Conv{\operatorname{Conv}}
\def\GL{\operatorname{GL}}
\def\Hom{\operatorname{Hom}}
\def\sgn{\operatorname{sgn}}
\def\d{\operatorname{d}}
\title{Defective dual varieties for real spectra}
\date{\today}
\author{Jens Forsg{\aa}rd}
\address{Universit{\'e} de Gen{\`e}ve, Math{\'e}matiques, Villa Battelle, 1227 Carouge, Suisse.}
\email{Jens.Forsgaard@unige.ch}
\begin{document}

\begin{abstract}
We introduce an invariant of a finite point configuration $A \subset \RR^{1+n}$
which we denote the \emph{cuspidal form} of $A$. We use this invariant to extend Esterov's
characterization of dual defective point configurations to exponential sums;
the dual variety associated to $A$ has codimension at least $2$
if and only if $A$ does not contain any iterated circuit.
\end{abstract}

\maketitle

\section{Introduction}
\label{sec:Intro}

The main undertaking of \emph{fewnomial theory} is to bound the number of connected components
of the positive part of a variety defined by a system of equations solely in terms of the number
of variables $n$ and the total number of monomials $N$ appearing in the system.
Since the constitutive monograph \cite{Kho91} of Khovanski{\u\i}, fewnomial theory has often
been studied alongside the theory of exponential sums. After all, the 
coordinatewise exponential map $\exp \colon \RR^n \rightarrow \RR_+^n$
is a diffeomorpism and replacing monomials $z^\alpha$ by
exponentials $e^{\<w, \alpha\>}$ the fundamental examples (read: Descartes' rule of signs)
remain valid.

Lately, fewnomial theory has also been studied from the viewpoint of Gel'fand, Kapranov, and Zelevinsky's 
``$A$-philosophy.'' In this approach, one considers the family of all polynomials
which can be expressed using a fixed set (the \emph{support set}) $A$ of exponent vectors $\alpha$.
For example, in \cite{BD16} the bound on the number of positive solutions of a system of
equations supported on a circuit was sharpened by considering in addition the
combinatorics of $A$.

Seldom has the two approaches been combined. We are aware only of \cite{RR16},
where the \emph{$A$-discriminant} and its \emph{Horn--Kapranov uniformization} was generalized
to the case of exponential sums. We put ourself in this setting, and consider the family
\[
(\CC^\times)^A = \bigg\{ \, f(w) = \sum_{\alpha \in A} c_\alpha\, e^{\<w,\alpha\>} \, \bigg| \, c_\alpha \in \CC^\times \,\bigg\},
\]
where $A\subset \RR^{1+n}$ is a finite set, and $N = \# A$. We have many names for those we love;
the support set $A$ is also known as the \emph{Bohr spectrum} (or, simply, \emph{spectrum}) of an exponential sum
$f \in (\CC^\times)^A$.

The work presented in this article emerged from an innocent
question of whether a theorem of Katz \cite{Kat73a} holds also in the framework of 
exponential sums. The answer is affirmative, as we show in Theorem~\ref{thm:Hessian}. This theorem
has already seen an application in \cite{FNR17} to reduce a fewnomial hypersurface bound from exponential
to subexponential in the dimension.
Our main line of thought goes, however, in a slightly different direction. 

We associate to the spectrum $A\subset \RR^{1+n}$ a combinatorial invariant in the form of a
homogeneous polynomial $P_A(t)$ of degree $n$, which we call the \emph{cuspidal form}
of $A$, see Definition~\ref{def:CharacteristicPolynomial}. Here, $t$ denotes the parameters of the Horn--Kapranov uniformization of the $A$-discriminant $\cX_A$ and, hence, the cuspidal form depends also on a choice of \emph{Gale dual} of $A$. The name ``cuspidal form'' reflects the fact that $P_A(t)$ describes the preimage of the cuspidal
locus of $\cX_A$ under the Horn--Kapranov uniformaziation. 
In particular, as one observes immediately, the configuration
$A$ is \emph{dual defective} if and only if $P_A(t)$ is trivial.

The core part of this work is to describe the properties of the cuspidal form $P_A(t)$ as an invariant of the
spectrum $A$. Our main, technical, results are concerned with factorizations.
For example, if $A$ is a \emph{diagonal configuration} (Definition~\ref{def:Diagonal}),
then the cuspidal form $P_A(t)$ factors as a product of the cuspidal forms of the
\emph{diagonal configurations} of $A$.

We spend a fair amount of energy describing linear factors of $P_A(t)$.
These are of two distinct types. The first type consists of
linear factors corresponding to rows of the Gale dual. These factors correspond to
points $\alpha \in A$ such that $A \setminus \{\alpha\}$ is dual defective.
Such factors are studied in \S\ref{ssec:SubConfigurations} and \S\ref{sec:LinearFactors}.
The complementary type corresponds to discriminant varieties embedded into the cuspidal locus
of $\cX_A$. This generalizes results of \cite{MT15}, where the cuspidal form 
first appeared in the special case of $n=1$ (when $P_A(t)$ is itself a linear form).

As an application, we extend Esterov's characterization from \cite{Est10} and \cite{Est13} of dual defective point 
configurations to the case of exponential sums: 
A point configuration $A$ is dual defective if and only if it does not contain any 
\emph{iterated circuit} (Theorem~\ref{thm:DualDefectiveIteratedCircuit}).
Our proof uses in addition to the properties of the cuspidal form only the pidgeon hole principle.

Finally, in \S\ref{sec:Bivariate} we study the special case $n=2$ when $P_A(t)$
is a quadratic form. Defined only up to choice of coordinates of the parameter space $\CC^m$ of the Horn--Kapranov uniformization, it is natural to consider the rank of $P_A(t)$. 
We consider instead the signature of $P_A(t)$, which is a well-defined invariant of $A$
if we restrict to real Gale duals.
It turns out (Theorem~\ref{thm:QuadraticFormRank}) that the rank of $P_A(t)$
is at most $3$, independent on the number of variables $m$.
The degenerate case that the rank is at most $2$ occurs if and only if
the spectrum $A$ is contained in a conic section; the type of the conic section
is described by the signature of $P_A(t)$.
These results relate to the study of self-dual toric varieties as follows.
Given a two-dimensional (integral) point configuration $A$, the cusps of of the
associated toric variety $X_A$ are (intuitively) contained in two lines. Hence, if 
$X_A$ is self dual, then $P_A(t)$ factors into linear forms, 
implying by Theorem~\ref{thm:QuadraticFormRank} that $A$ is contained in a conic section.
For an explicit example, see the description of the Togliatti surface in \cite[\S4.1]{DP17}, 
whose spectrum consists of six points on an ellipse.

It is possible toconsider for example complex spectra instead of real.
The $A$-discriminant variety, in the sense of exponential sums, remains well-defined. 
The characterization
of dual defective point configurations, as those who do not contain an iterated
circuit, remains valid. However, for complex spectra, the relationship to combinatorics
is lost, as we can for example not talk about Newton polygons. We have chosen to stay in the real world
as this is the most interesting case for applications.

\subsection{Acknowledges}
I would like to thank Professor Ragni Piene for the helpful discussions,
and Professor J.~Maurice Rojas for the inspiration.

\section{Prelude: Gale Duality and the Horn--Kapranov Uniformization}
\label{sec:Gale}
\subsection{The setup}
By ordering the elements of $A$ we obtain an isomorphism $\CC^A \simeq \CC^N$.
With an ordering chosen, by abuse of notation, we identify $A$ with the $(1+n)\times N$-matrix $A = \big(\alpha_1, \dots, \alpha_N\big)$.
Doing so, we often write $c_k$ for the coefficient of $e^{\<w, \alpha_k\>}$, and identify an exponential sum
$f$ with its (column) coefficient vector $c = (c_1, \dots, c_N)^T$.

The properties of the exponential sum $f$ which we are interested in (e.g., the existence of a singular point)
are invariant of linear changes of variables. That is, if we denote by $M_{N,n}(\RR)$ the space of all such matrices $A$, then we are interested in the orbits under the left action of $\GL_{1+n}(\RR)$.

If $A \subset \ZZ^{1+n}$, then the exponential sum is said to be polynomial, and the configuration
$A$ is said to be algebraic. In this case, we can associate to $f$ the polynomial 
$g(z) = \sum_{\alpha \in A} c_\alpha\, z^\alpha$. The analytic variety $Z(f)$ is an infinite covering
of the quasi-affine variety $Z(g) \subset (\CC^\times)^{1+n}$. We often descend to the algebraic case in
examples.

Two natural assumptions are imposed. Firstly, $A$ is assumed
to be pseudo-homogeneous. That is, we assume the existence of a linear form  
$\xi \in \Hom(\RR^{1+n}, \RR)$ such that $\<\xi, \alpha\> = 1$ for all $\alpha \in A$.
Secondly, it is assumed that $\xi$ is unique. These two assumptions are equivalent to
that the matrix $A$ is of full rank (equal to $1+n$) with the all ones vector in its row span.
In particular, the \emph{Newton polygon} $\N = \Conv\big(\alpha_1, \dots, \alpha_N\big)$
has dimension $n$. We do not assume that the columns of $A$ are distinct.

We associate to $A$ the map $\exp_A$ given by
\[
\exp_A\colon \CC^{1+n} \rightarrow (\CC^\times)^N, \quad z\mapsto \big(e^{\<w, \alpha_1\>}, \dots, e^{\<w, \alpha_N\>}\big)^T,
\]
where $\CC^N$ should be considered as the dual space of $\CC^A$. 
In the algebraic case, the map $\exp_A$ parametrizes a toric variety denoted $X_A \subset (\CC^\times)^N$. 
Its projectively dual $\cX_A \subset (\CC^\times)^A$ is known as the \emph{$A$-discriminantal variety}, see \cite{GKZ94}.
In the case that $A$ is algebraic and $\cX_A$ is a hypersurface, 
its defining polynomial $D_A$ is called the \emph{$A$-discriminant}.
In the algebraic case, by definition, $D_A(c) = 0$ if and only if the polynomial $g$ 
has a singular point in $(\CC^\times)^n$.
In the general case we define $\cX_A \subset (\CC^\times)^A$ by that $f \in \cX_A$
if and only if the exponential sum $f$ has a singular point in $\CC^n$. In general, $\cX_A \subset (\CC^\times)^A$ is not algebraic for real spectra.

\begin{example}
A point configuration $A$ is said to be a \emph{pyramid}
if all points but one is contained in some strict affine subspace.
It is straightforward to check that if $A$ is a pyramid, then the discriminant locus 
$\cX_A$ is empty.
\end{example}

\begin{example}
The \emph{codimension} $m$ of a spectrum $A$ is given by $m = N - n - 1$.
A point configuration of codimension $1$, which is not a pyramid,
is said to be a \emph{circuit}. Let us consider a circuit in two dimensions.
Using the $\GL_{1+n}(\RR)$-action, we can assume that $A$ is of the form
\[
A = \left(\begin{array}{cccc}
1 & 1 & 1 & 1 \\
0 & 1 & 0 & \alpha_{11} \\
0 & 0 & 1 & \alpha_{12}
\end{array}\right).
\]
The formula of the discriminant $D_A(c)$ in the algebraic case, from \cite[p.~274]{GKZ94},
generalizes to the binomial exponential expression
\[
D_A(c) = (\alpha_{11}+\alpha_{12} - 1)^{\alpha_{11}+\alpha_{12} - 1} c_0 c_1^{\alpha_{11}} c_2^{\alpha_{12}}+ (-c_0 \alpha_{11})^{\alpha_{11}} (-c_0 \alpha_{12})^{\alpha_{12}}.
\]
The multivaluedness of the exponential functions require some caution when handling this
expression.
For this reason, we refrain from using the $A$-discriminant $D_A(c)$ in our analysis.
\end{example}

\begin{remark}
In examples, it is more convenient to consider a family of inhomogeneous $n$-variate exponential sums.
For an exponential sum $f$ in the variables $z_1, \dots, z_n$, the corresponding pseudo-homo\-geneous
exponential sum is given by $e^{z_0} f$. For a family of $n$-variate inhomogeneous polynomials
with support set $A$ this corresponds to adjoining a top row of all ones in its matrix representation.
In this case, $\xi = (1, 0, \dots, 0)$.
\end{remark}

\subsection{The Horn--Kapranov uniformization}
Our main tool is the Horn--Kapranov uniformization 
of the $A$-discriminant hypersurface from \cite{Kap91}.
For exponential sums, this map was deduced in \cite[Thm.~1.7]{RR16}.
Though, step-by-step, the standard deduction of the Horn--Kapranov
uniformization (\emph{cf.} \cite[Prop.~4.1]{DFS07}) is sufficient to cover also
the case of exponential sums.

A \emph{Gale dual} of $A$ is a matrix $B$ yielding an exact sequence
of $\CC$-vector spaces
\begin{center}
\begin{tikzpicture}
  \node (O0) {0};
  \node (O1) [right of=O0] {$\CC^{m}$};
  \node (O3) [right of=O1] {$\CC^{A}$};
  \node (O5) [right of=O3] {$\CC^{1+n}$};
  \node (O6) [right of=O5] {0.};
  \draw[->] (O0) to node {} (O1);
  \draw[->] (O1) to node {$B$} (O3);
  \draw[->] (O3) to node {$A$} (O5);
  \draw[->] (O5) to node {} (O6);
\end{tikzpicture}
\end{center}
Since $A$ has full rank, the rank of $B$ is $m$. 
Let us denote the rows of $B$ by $\beta_k$ for $k= 1, \dots, N$,
and let us introduce coordinates $t$ in $\CC^m$. 
Let $\CC_B^m$ denote the inverse image of $(\CC^\times)^A$ under the map $B$.
The \emph{exceptional locus} $\H_B = \CC^m \setminus \CC_B^m$
is the union of the linear subspaces $\beta_k^*$.
Assuming that $A$ is not a pyramid (i.e., assuming that
no $\beta_k = 0$), then $\H_B$ is a
central hyperplane arrangement.

\begin{theorem}[Kapranov, Rojas--Rusak]
\label{thm:HornKapranov}
The dual variety\/ $\cX_A$ is parametrized by the map
\[
\Phi \colon  (\CC^\times)^{1+n}\times \CC^m_B \rightarrow (\CC^\times)^A, \quad \Phi(\omega; t) = \exp_A(\omega)*(Bt),
\]
where $*$ denotes component-wise multiplication. \qed
\end{theorem}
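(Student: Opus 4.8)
The plan is to prove the set-theoretic equality $\operatorname{im}\Phi = \cX_A$ directly, exploiting the fact that for exponential sums every factor $e^{\<w,\alpha_k\>}$ is automatically a unit, so none of the torus-boundary subtleties of the algebraic case arise. First I would unwind the defining condition for $\cX_A$. A coefficient vector $c = (c_1,\dots,c_N)^T \in (\CC^\times)^A$ lies in $\cX_A$ precisely when the exponential sum $f(w) = \sum_k c_k\, e^{\<w,\alpha_k\>}$ has a point $w$ at which $f$ and all its first partials vanish. The decisive simplification is pseudo-homogeneity: writing $\nabla f(w) = \sum_k c_k \alpha_k\, e^{\<w,\alpha_k\>}$ and pairing with the form $\xi$ for which $\<\xi,\alpha_k\> = 1$, the Euler-type identity $\<\xi, \nabla f(w)\> = f(w)$ shows that $\nabla f(w) = 0$ already forces $f(w) = 0$. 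Hence $c \in \cX_A$ if and only if there exists $w$ with $\nabla f(w) = 0$.

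Next I would package the gradient condition through the matrix $A$. Introducing $x = c * \exp_A(w)$, with entries $x_k = c_k\, e^{\<w,\alpha_k\>}$, the identity $\nabla f(w) = Ax$ (the columns of $A$ being the $\alpha_k$) turns the singularity condition into $Ax = 0$, i.e. $x \in \ker A$. By the Gale exact sequence of Section~\ref{sec:Gale} we have $\ker A = \operatorname{im}B$, so the condition becomes $x = Bt$ for some $t \in \CC^m$. Crucially, since $c \in (\CC^\times)^A$ and each $e^{\<w,\alpha_k\>} \neq 0$, the vector $x$ has all coordinates nonzero; thus $Bt \in (\CC^\times)^A$, which by definition of the exceptional locus means $t \in \CC^m_B$. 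Solving $c_k\, e^{\<w,\alpha_k\>} = (Bt)_k$ for $c$ gives $c = (Bt) * \exp_A(-w)$, and absorbing the sign into the exponential variable ($\omega = -w$) yields $c = \exp_A(\omega) * (Bt) = \Phi(\omega;t)$. This establishes $\cX_A \subseteq \operatorname{im}\Phi$.

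For the reverse inclusion I would run the computation backwards: given $(\omega,t)$ with $t \in \CC^m_B$, the vector $c = \Phi(\omega;t)$ is a componentwise product of two vectors with nonzero entries (here $t \in \CC^m_B$ is exactly what makes $Bt$ have no zero coordinate), so $c \in (\CC^\times)^A$; and $c * \exp_A(-\omega) = Bt \in \operatorname{im}B = \ker A$, whence $\nabla f_c(-\omega) = A(Bt) = (AB)t = 0$. Thus $w = -\omega$ is a singular point of $f_c$ and $c \in \cX_A$. The two inclusions give $\operatorname{im}\Phi = \cX_A$.

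The main obstacle is not any single computation but the bookkeeping that upgrades the chain of implications to genuine bijective correspondences: one must track that passing between $c$ and $x = c*\exp_A(w)$ is invertible exactly because every factor is a unit, and that the restriction $t \in \CC^m_B$ (rather than $t \in \CC^m$) is both forced by, and sufficient for, membership in $(\CC^\times)^A$ — this is precisely the role of deleting the exceptional locus $\H_B$ from the parameter space. I would also remark that, unlike the algebraic situation where the Horn--Kapranov map merely dominates the discriminant and one must pass to Zariski closures, the analytic description here yields the equality $\operatorname{im}\Phi = \cX_A$ on the nose, since exponentials never vanish and no boundary components intervene.
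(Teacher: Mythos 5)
Your proof is correct, but it takes a genuinely different route from the paper for the simple reason that the paper proves nothing here: Theorem~\ref{thm:HornKapranov} is stated as known, attributed to Kapranov and Rojas--Rusak, with the proof delegated to \cite[Thm.~1.7]{RR16} and the remark that the standard deduction (\emph{cf.}~\cite[Prop.~4.1]{DFS07}) carries over to exponential sums. Your argument supplies that deduction in a self-contained way: pseudo-homogeneity gives the Euler identity $\<\xi, \nabla f(w)\> = f(w)$, so membership in $\cX_A$ reduces to vanishing of the gradient alone; writing $\nabla f(w) = A\big(c * \exp_A(w)\big)$ and using exactness of the Gale sequence ($AB = 0$ and $\ker A = \operatorname{im} B$) turns this into $c * \exp_A(w) = Bt$; and because every $c_k$ and every $e^{\<w,\alpha_k\>}$ is a unit, the relation is reversible, with $t \in \CC^m_B$ exactly equivalent to $c \in (\CC^\times)^A$. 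Beyond independence from the literature, this buys the sharp set-theoretic equality $\operatorname{im}\Phi = \cX_A$ with no closure operation --- precisely the form in which the theorem is used later, where an arbitrary point of $\cX_A$ is written as $f = \Phi(\omega;t)$ (proofs of Theorems~\ref{thm:CuspidalLocus} and~\ref{thm:Hessian}) --- whereas in the algebraic case the uniformization only dominates the dual variety and one must pass to Zariski closures, as you correctly note.

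One detail you should make explicit. Your surjectivity argument produces $\omega = -w$ ranging over all of $\CC^{1+n}$, whereas the stated domain of $\Phi$ has first factor $(\CC^\times)^{1+n}$. That torus restriction is an artifact of Kapranov's monomial setting; it is inconsistent with the paper's own definition of $\exp_A$ as a map on $\CC^{1+n}$ and with the later evaluation of $\Phi$ at $\omega = 0$ in the proof of Theorem~\ref{thm:Hessian}, so your implicit reading (first factor $\CC^{1+n}$) is the right one. But taken literally, the printed domain would break your first inclusion: for a given $c$ the admissible parameters $\omega$ are exactly $-w$ with $w$ a singular point of $f_c$, so a coefficient vector all of whose singular points have a vanishing coordinate would escape the image. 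State explicitly that you prove the theorem with domain $\CC^{1+n} \times \CC^m_B$.
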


The map $\Phi$ is far from injective; the parameter space and $(\CC^\times)^A$
has the same dimensions, $N = 1+ n + m$. However, that $\Phi$ is homogeneous in $t$ and 
the existence of the linear form $\xi$ implies that $\Phi$ para\-metrizes a strict (multivalued)
analytic subvariety of $(\CC^\times)^A$. 

\begin{remark}
\label{rem:ReducedDiscriminant}
In the algebraic case, the $A$-discriminantal polynomial has $1+n$ homogeneities, arising from the matrix
$A$. In Kapranov's paper \cite{Kap91}, these homogeneities was removed by composition
with the map $\exp_B\colon (\CC^\times)^A \rightarrow (\CC^\times)^m$. The composite map is, in this algebraic case,
a rational function of $t$. To avoid real powers of linear forms we
settle for the map $\Phi$ of Theorem~\ref{thm:HornKapranov}.
\end{remark}

\begin{remark}
The exponential sum $f = \Phi(\omega; t)$ has a singular point with coordinates $w = -\omega$.
\end{remark}

\subsection{Gale duality}
Let us describe a well-known property of Gale duality. 
Since $A$ has full rank, 
we can find a non-vanishing maximal minor $|A_1|$ of $A$.
After possibly rearranging the columns of $A$, we can write it in the block form $A = (A_1, A_2)$.
Let us extend $A$ to a square matrix $\bar{A}$, with determinant $= 1$, and inverse $\bar{B}$:
\begin{equation}
\label{eqn:UnimodularExtensions}
\bar{A} = \left(\begin{array}{cc}
 A_1 & A_2\\ * & *
 \end{array}\right)
\quad \text{and} \quad
\bar{B}
= \left(\begin{array}{cc}
 * & B_2 \\ * & B_1
\end{array}\right).
\end{equation}
From $\bar{B}$ we obtain the Gale dual 
\begin{equation}
\label{eqn:B}
B 
= \left(\begin{array}{c}
  B_2 \\  B_1
\end{array}\right).
\end{equation}

\begin{lemma}
\label{lem:Schur}
Let $B$ be any Gale dual of $A$, and let $\sigma \subset [N]$ be a choice of\/ $1+n$ indices
with sign $\sgn(\sigma) = \prod_{k\in \sigma}(-1)^k$.
Denote by $A_\sigma$ the maximal cofactor of $A$ obtained by keeping the
columns indexed by $\sigma$ and let $B_\sigma$ denote the complementary maximal cofactor of\/ $B$
obtained by deleting the rows indexed by $\sigma$. Then,
there is a nonzero constant $C(B)$, independent of $\sigma$, such that 
\[
|A_\sigma| = C(B)\sgn(\sigma)\, |B_\sigma|.
\]
\end{lemma}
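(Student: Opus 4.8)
The plan is to prove the identity $|A_\sigma| = C(B)\,\sgn(\sigma)\,|B_\sigma|$ by exploiting the fact that $\bar A$ and $\bar B$ are inverse square matrices, so that the classical Jacobi complementary-minor identity applies. Let me sketch the approach.

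=== PROOF PROPOSAL ===

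The plan is to deduce the identity from the Jacobi complementary-minor theorem applied to the mutually inverse square matrices $\bar{A}$ and $\bar{B}$ of \eqref{eqn:UnimodularExtensions}, together with a reduction showing that the constant does not depend on the particular square extension we choose. Recall Jacobi's identity: if $\bar A$ is an invertible $N\times N$ matrix with inverse $\bar B = \bar A^{-1}$, then for any index sets $I, J$ of equal cardinality, the minor of $\bar A$ on rows $I$ and columns $J$ equals $\pm\, \det(\bar A)$ times the complementary minor of the \emph{transpose-inverse}, i.e. of $\bar B^T$, on the complementary row and column sets. Because we have arranged $\det(\bar A) = 1$, the determinantal factor drops out and we are left with an identity relating a maximal minor of $A$ (a block of the top $1+n$ rows of $\bar A$) to a complementary maximal minor of $B$ (a block of the bottom $m$ columns of $\bar B$, which are exactly the rows of $B$).

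First I would fix the square extension $\bar A$ and its inverse $\bar B$ as in \eqref{eqn:UnimodularExtensions} and write down precisely which submatrices correspond to $A_\sigma$ and $B_\sigma$. Selecting the columns of $A$ indexed by $\sigma \subset [N]$ and taking the top $1+n$ rows of $\bar A$ gives exactly $A_\sigma$; Jacobi's theorem then expresses $|A_\sigma|$ as $\sgn(\sigma)\,|A|\cdot M$, where $M$ is the complementary minor of $\bar B^T$ formed from the rows and columns \emph{not} indexed by $\sigma$ in the appropriate blocks. Tracking the block structure in \eqref{eqn:UnimodularExtensions}, this complementary minor is precisely the maximal cofactor of $B$ obtained by deleting the rows indexed by $\sigma$, which is $|B_\sigma|$. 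The sign bookkeeping must be done carefully: Jacobi's sign is $(-1)^{\sum_{k\in\sigma} k + \sum_{k\in\sigma} k}$ against a reference ordering, and one verifies it collapses to $\sgn(\sigma) = \prod_{k\in\sigma}(-1)^k$ up to a global sign absorbed into the constant.

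The remaining point is that the statement allows $B$ to be \emph{any} Gale dual, not only the particular $\bar B$ arising from our chosen extension. Any two Gale duals differ by right multiplication by an element $g \in \GL_m(\CC)$, since both have image equal to $\ker A$. Under $B \mapsto Bg$, every maximal cofactor $|B_\sigma|$ is multiplied by the same scalar $\det(g)$, so the identity is preserved with the constant rescaled by $1/\det(g)$; this shows both that the identity holds for arbitrary $B$ and that $C(B)$ is well defined as a single constant independent of $\sigma$. I expect the main obstacle to be the sign and index-ordering bookkeeping in applying Jacobi's identity across the block decomposition, since the complementary-minor theorem introduces signs depending on the positions of the selected rows and columns, and one must check these reorganize exactly into the claimed $\sgn(\sigma)$ with the residual sign folded into $C(B)$. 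Everything else is a direct translation of Jacobi's theorem under the normalization $\det(\bar A)=1$.
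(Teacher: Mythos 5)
Your proposal is correct, but it takes a genuinely different route from the paper. The paper proves the identity by hand: it performs block row operations on the extended matrix $\bar A$ (multiplying by an elementary matrix $E$ as in \eqref{eqn:SchurElementaryMatrix} so that $E\bar A$ becomes block upper triangular with $B_\sigma^{-1}$ in the lower-right corner), reads off $|A_\sigma| = |P_\sigma|\,|B_\sigma|$ from $\det(E\bar A P)=1$, and needs a small case analysis (the case $|A_\sigma| = |B_\sigma| = 0$, plus the symmetry that $A^T$ is a Gale dual of $B^T$) because the elimination requires a nonsingular pivot block. You instead invoke the classical Jacobi complementary-minor identity for the mutually inverse matrices $\bar A$ and $\bar B$, which with the normalization $\det(\bar A)=1$ immediately gives $|A_\sigma| = \pm\,|B_\sigma|$ with sign $(-1)^{\sum_{k\le 1+n}k + \sum_{k\in\sigma}k}$, i.e. $\sgn(\sigma)$ up to a global sign absorbed into $C(B)$; your reduction to an arbitrary Gale dual via $B \mapsto Bg$, $g \in \GL_m$, matches the paper's first paragraph. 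What your approach buys is brevity and the elimination of the case analysis: Jacobi's identity holds for all minors with no nonvanishing hypothesis, whereas the paper's elimination argument is self-contained (it essentially reproves the needed case of Jacobi). Two cosmetic points to fix when writing it up: your displayed Jacobi sign is mistyped (the exponent should be $\sum_{i \in I} i + \sum_{j \in J} j$ with $I$ the top $1+n$ row indices and $J = \sigma$, not twice the sum over $\sigma$), and note that under $B \mapsto Bg$ each cofactor scales by $\det(g)$ so the constant rescales by $1/\det(g)$ as you say --- the paper's own claim $C(B') = C(B)\,|T|$ has the scalar on the wrong side, though this is immaterial since only nonvanishing of $C(B)$ matters.
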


\begin{proof}
Two Gale duals differ only by multiplication by a a matrix $T \in \GL_m(\RR)$. Thus, it suffices to prove
the theorem for one explicit Gale dual $B$. Indeed, if $B' = BT$, then $C(B') = C(B)\, |T|$.

Let $A_1, A_2, B_1, B_2, \bar{A}$, and $\bar{B}$ be as in \eqref{eqn:UnimodularExtensions}. 
We claim that $C(B) = 1$ in this case.
Since $|A_1| \neq 0$, we can perform
row operations on $\bar{A}$, which does not alter its determinant, to eliminate all entries in the bottom left block.
This corresponds to multiplication by some lower triangular matrix $E$ of the form
\begin{equation}
\label{eqn:SchurElementaryMatrix}
E = \left(\begin{array}{cc}
 I & 0\\ * & I  
 \end{array}\right).
\end{equation}
Notice that $E^{-1}$ has the same form as $E$. We have that $E \bar{A} \bar{B} E^{-1} = I$. 
It follows that 
\[
\bar{B} E^{-1} = \left(\begin{array}{cc}
 * & B_2 \\ * & B_1  
 \end{array}\right),
\quad \text{and} \quad 
E \bar{A} = \left(\begin{array}{cc}
 A_1 & A_2\\ 0 & B_1^{-1}
 \end{array}\right).
\]
Since $|E \bar{A}| = 1$ we conclude that $|A_1| = |B_1|$. 

Let $\sigma$ be some other choice of indices. If $|A_\sigma| = |B_\sigma| = 0$ then there is nothing to prove. Also, since $A^T$ is a Gale dual of $B^T$, it suffices to consider the case $|A_\sigma| \neq 0$.

Let $P_\sigma$ be a permutation matrix such that $AP_\sigma = (A_\sigma, A_{\kappa})$, where the indices of $\sigma$
and $\kappa = [N] \setminus \sigma$ are ordered increasing. As above, we find an elementary matrix $E$ of the form \eqref{eqn:SchurElementaryMatrix} such that
\[
E \bar{A} P= \left(\begin{array}{cc}
 A_\sigma & A_\kappa \\ 0 & B_\sigma^{-1}
 \end{array}\right).
 \]
It follows that $|A_\sigma| = |P_\sigma|\, |B_\sigma|$, which concludes the proof.
\end{proof}

\begin{remark}
In the algebraic case, it is natural to assume that $\ZZ A = \ZZ^{1+n}$,
in which case $\bar{A}$ can be chosen as an integer unimodular matrix.
That is, also $\bar{B}$ is integer unimodular, and hence $B$ is a Gale dual in the combinatorial sense.
In this case we have that $C(B) = \pm 1$, depending on $B$.
\end{remark}

\section{The Cuspidal Form}
\label{sec:CuspidalForm}
In this section we define the cuspidal form $P_A(t)$. To simplify notation,
we impose the assumption that the top row of $A$
is equal to the all ones vector, and we let $\hat A$ denote the $n\times N$
matrix obtained from $A$ by deleting the top row.

Let $B$ denote a Gale dual of $A$. The polynomial $P_A(t)$
depends on $B$ only up to an affine change of coordinates in $\CC^m$;
we consider this dependence to be implicitly understood from the fact that $P_A(t)$
is written as a polynomial in the variables $t$.

In slight deviation from the notation in Lemma~\ref{lem:Schur},
we let $\Sigma$ denote the set of all subsets $\sigma \subset [N]$
of cardinality $n$. For each $\sigma \in \Sigma$,
 let $\hat A_\sigma$ denote the maximal cofactor of $\hat A$ obtained by deleting all
 columns corresponding to indices $k \notin \sigma$.

\begin{definition}
\label{def:CharacteristicPolynomial}
We define the \emph{cuspidal form} $P_A(t)$ to be
\begin{equation}
\label{eqn:CharacteristicPolynomial}
P_A(t) = \sum_{\sigma\in \Sigma} \left|\hat A_\sigma \right|^2 \, \prod_{k \in \sigma}  \<\beta_k, t\>.
\end{equation}
\end{definition}

The cuspidal form $P_A(t)$ is a homogeneous form
of degree $n$ in the coordinates $t$.
Hence it defines, in the case that it is nontrivial,
a hypersurface in $\PP^{m-1}$. (Except, of course, for 
the case $m=1$.)
As $P_A(t)$ is defined only up to choice of coordinates
one can not, in general, ask for a combinatorial interpretation
of its coefficients. When there is a canonical choice of coordinates
there is, however, reasonable interpretation.

\begin{example}
\label{ex:PyramidHasTrivialCharacteristicPolynomial}
Let $A$ be a pyramid. That is, there is a point $\alpha \in A$ such that
$A\setminus \{\alpha\}$ is contained in an affine space of dimension $n-1$.
This is equivalent to that $\beta_\alpha = 0$. 
Thus, if $\alpha \in \sigma$ then $\<\beta_\alpha, t\> = 0$, and if $\alpha \neq \sigma$
then $|\hat A_\sigma| = 0$. Hence, each term of \eqref{eqn:CharacteristicPolynomial}
vanishes, implying that $P_A(t)$ is trivial.
\end{example}

\begin{example}
\label{ex:PinCodimensionOne}
Let $m = 1$, and let $A_k$ denote the $(n+1)\times (n+1)$ submatrix
of $A$ obtained by deleting the $k$th entry and assume, without loss of generality,
that $\alpha_1 = (1, 0, \dots, 0)^T$. Then, each $\sigma \in \Sigma$ containing $1$ 
has $|\hat A_\sigma| = 0$,
and each $\sigma$ not containing $1$ has $|\hat A_\sigma| = |A_k|$ where $k = k(\sigma)$ is the unique index greater than $1$ not contained in $\sigma$.
By Lemma~\ref{lem:Schur} we can choose a Gale dual $B$ according to the rule that 
$\beta_k =  (-1)^k |A_k|$.
Hence,
\[
P_A(t) 
= t^n \, \Big( \prod_{k > 1} (-1)^k |A_k|\Big)\sum_{k > 1}(-1)^k |A_k|  =   \sgn(n) \, t^n\, \prod_{k = 0}^{n+2} |A_k|
\]
where $\sgn(n) = (-1)^{{n \choose 2}}$.
Notice, in particular, that $P_A(t)$ is trivial if and only if $|A_k| = 0$ for some $k$,
which is equivalent to that $A$ is a pyramid.
\end{example} 

\subsection{The Cuspidal Locus of $\cX$}
\label{ssec:Cusps}
Recall the assumption that the top row of $A$ consist of the all ones vector. 
Let us dehomogenize $\Phi$ by setting $\omega_0 = 1$. That is, we consider
the parametrization map $\Phi\colon (\CC^\times)^n \times \CC^m_B \rightarrow \cX_A$.
In standard terminology, a point $f  = \Phi(\omega; t) \in \cX_A$ is a \emph{cusp} 
of $\cX_A$ if and only if the image of the pushforward
\[
\d\Phi \colon T_{(w; t)} \left( (\CC^\times)^n \times \CC^m_B\right) \rightarrow T_{f} \cX_A
\]
is an affine space of dimension at most $N-2 = n+m-1$. That is, $f$ is a cusp if and only if
the Jacobian matrix
\[
J_\Phi(\omega; t) = \left( \begin{array}{ccc}
\alpha_{11}\,\<\beta_1, t\>\,e^{\<\omega, \alpha_1\>}\, & \dots & \alpha_{N1}\,\<\beta_N, t\>\,e^{\<\omega, \alpha_N\>} \\
\vdots & \ddots & \vdots \\
\alpha_{1n}\,\<\beta_1, t\>\,e^{\<\omega, \alpha_1\>}\ & \dots & \alpha_{Nn}\,\<\beta_N, t\>\,e^{\<\omega, \alpha_N\>}\\
\beta_{11} e^{\<\omega, \alpha_1\>}\ & \dots & \beta_{N1} e^{\<\omega, \alpha_N\>} \\
\vdots & \ddots & \vdots \\
 \beta_{1m} e^{\<\omega, \alpha_1\>}\ & \dots & \beta_{Nm} e^{\<\omega, \alpha_N\>}
\end{array}\right).
\]
has rank $n+m-1$. (We drop the subindex $\Phi$ in the notation.)

\begin{theorem}
\label{thm:CuspidalLocus}
We have that $f = \Phi(\omega; t)$ is a cusp of\/ $\cX_A$ if and only if\/ $P_A(t) = 0$.
\end{theorem}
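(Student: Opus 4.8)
The plan is to work directly with the Jacobian $J(\omega;t)$ and to reduce the rank-drop condition defining a cusp to the vanishing of a single determinant, which I will identify with $P_A(t)$. First I would observe that since each exponential $e^{\<\omega,\alpha_k\>}$ is nonzero, the $k$th column of $J$ may be rescaled without changing its rank; thus $\operatorname{rank} J(\omega;t) = \operatorname{rank} M$, where
\[
M = \left(\begin{array}{c} \hat A\, D_t \\ B^T \end{array}\right), \qquad D_t = \operatorname{diag}\big(\<\beta_1,t\>,\dots,\<\beta_N,t\>\big).
\]
On $\CC^m_B$ the matrix $D_t$ is invertible. Since the generic rank of $M$ is $n+m$, the point $f=\Phi(\omega;t)$ is a cusp precisely when $\operatorname{rank} M \le n+m-1$.

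The key identity is that the Cauchy--Binet formula applied to the product of the $n\times N$ matrix $\hat A$ with $D_t \hat A^T$ yields exactly
\[
\det\big(\hat A\, D_t\, \hat A^T\big) = \sum_{\sigma\in\Sigma} |\hat A_\sigma|^2 \prod_{k\in\sigma}\<\beta_k,t\> = P_A(t),
\]
so that $P_A(t)$ is realized as the determinant of the $n\times n$ matrix obtained from the top block of $M$ times $\hat A^T$. The remaining task is to show that the full $(n+m)\times(n+m)$ rank of $M$ is governed by this single $n\times n$ determinant. For this I would exploit Gale duality in the two forms $\hat A B = 0$ and $\sum_k \beta_k = 0$ (the latter because the top row of $A$ is the all-ones vector, so $\mathbf 1^T B = 0$).

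These relations have two consequences. On one hand, $M\mathbf 1 = \big(\hat A D_t \mathbf 1,\; B^T\mathbf 1\big)^T = \big(\hat A B t,\; 0\big)^T = 0$, so the all-ones vector lies in $\ker M$. On the other hand, setting $N' = (\hat A^T \mid B)$, an $N\times(n+m)$ matrix, Gale duality gives $B^T \hat A^T = (\hat A B)^T = 0$, so
\[
M N' = \left(\begin{array}{cc} \hat A\, D_t\, \hat A^T & \hat A\, D_t\, B \\ 0 & B^T B \end{array}\right),
\]
a block upper-triangular matrix. For real $B$ of full rank $m$ the Gram matrix $B^T B$ is positive definite, hence invertible, so $\operatorname{rank}(M N') = m + \operatorname{rank}\big(\hat A D_t \hat A^T\big)$. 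The columns of $\hat A^T$ and of $B$ are mutually orthogonal and each block has full rank, so $N'$ has full column rank $n+m$; its image $V$ is a hyperplane, and a short positive-definiteness argument shows $\mathbf 1 \notin V$, whence $\CC^N = V \oplus \CC\mathbf 1$. Because $\mathbf 1 \in \ker M$, we get $M(\CC^N)=M(V)$, and therefore $\operatorname{rank} M = \operatorname{rank}(MN') = m + \operatorname{rank}\big(\hat A D_t \hat A^T\big)$.

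Combining these steps, $f$ is a cusp if and only if $\operatorname{rank} M \le n+m-1$, which by the displayed identity holds if and only if $\operatorname{rank}(\hat A D_t \hat A^T) \le n-1$, i.e.\ $\det(\hat A D_t \hat A^T) = P_A(t) = 0$. I expect the main obstacle to be justifying the reduction $\operatorname{rank} M = \operatorname{rank}(MN')$ rigorously: it rests on the three facts $\mathbf 1 \in \ker M$, $\mathbf 1 \notin V$, and the invertibility of $B^T B$, all of which ultimately use that $A$ has full rank and that $B$ is a \emph{real} Gale dual (so that the relevant Gram forms are definite). Everything else is the Cauchy--Binet computation and the block-triangular determinant, both of which are routine.
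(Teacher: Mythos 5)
Your argument is correct, but it takes a genuinely different route from the paper's proof. The paper works minor-by-minor: it normalizes $\alpha_1=(1,0,\dots,0)^T$, expands the maximal minor $J_1$ of the Jacobian by Laplace expansion in complementary $n\times n$ and $m\times m$ blocks, and then invokes Lemma~\ref{lem:Schur} (with the normalization $C(B)=1$) plus sign bookkeeping to convert the minors $|B_{\bar\sigma}|$ into $|A_{\bar\sigma}|=|\hat A_\sigma|$, arriving at $J_1=P_A(t)$. You never expand a maximal minor of the Jacobian at all: after rescaling columns you multiply $M$ on the right by $N'=(\hat A^T\mid B)$, note that Gale duality ($\hat A B=0$, $\mathbf{1}^TB=0$) makes $MN'$ block upper-triangular with invertible block $B^TB$, identify the remaining diagonal block via Cauchy--Binet as $\det\big(\hat A D_t \hat A^T\big)=P_A(t)$, and justify $\operatorname{rank}M=\operatorname{rank}(MN')$ through the decomposition $\CC^N=\operatorname{im}N'\oplus\CC\mathbf{1}$ together with $\mathbf{1}\in\ker M$; the facts you flag as the crux ($\mathbf{1}\notin\operatorname{im}N'$, invertibility of $B^TB$) do follow exactly as you indicate from $\operatorname{rank}A=1+n$ and the reality of $B$, and choosing a real Gale dual is harmless since the statement is invariant under $B\mapsto BT$, $T\in\GL_m(\RR)$. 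Each approach buys something. The paper's computation yields the finer fact that \emph{every} maximal minor of $J$ equals $P_A(t)$ up to a nonzero factor, at the cost of Lemma~\ref{lem:Schur} and a normalized Gale dual. Your computation requires no normalization of $B$ and no sign bookkeeping, and it has a bonus the paper obtains only by a separate argument: the matrix $\hat A D_t\hat A^T$ is precisely the Hessian $H_f$ at the singular point as computed in the proof of Theorem~\ref{thm:Hessian}, so your identity $\operatorname{rank}J = m + \operatorname{rank}\big(\hat A D_t\hat A^T\big)$ proves Theorem~\ref{thm:CuspidalLocus} and Katz' theorem (Theorem~\ref{thm:Hessian}) simultaneously, and in a stronger, rank-sensitive form.
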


\begin{proof}
We need to determine when the maximal minors of $J(\omega; t)$ vanishes.
This does not depend on $\omega$: the common 
factors $e^{\<\omega, \alpha_k\>}$ of each column of $J(\omega; t)$ 
can be factored outside of any minor. We compute the maximal minors of $J(1; t)$.

For each submatrix $S \subset A$ obtained by deleting any number of columns, 
let $\hat S$ denote the matrix $S$ with the top
row (consisting of all ones) deleted. In particular, $\hat \alpha$ denotes
the column of $\hat A$ corresponding to the column $\alpha \in A$

Let  $J_{k}$ denote the maximal minor of $J(1; t)$
obtained by deleting the $k$th column.
Without loss of generality, we impose two assumptions.
Firstly, as permuting the columns only alters the value of a minor by $\pm 1$,
we can assume that $k = 1$.
Secondly, we assume that $\alpha_{11} = \dots = \alpha_{1n} = 0$
(i.e., that $\alpha_1 = (1, 0 \dots, 0)^T$). Indeed, multiplying $J(1; t)$ from the left by
\[
T = \left( \begin{array}{ccc}
I_n & - \hat \alpha_1 t^T \\
0 & I_m
\end{array}\right)
\]
corresponds to the translation $\hat \alpha \mapsto \hat \alpha - \hat \alpha_1$.
As $\left|T\right| = 1$, all maximal minors are invariant under this action.

Let $\Sigma'$ denote the set of all subsets $\sigma \subset [N]\setminus \{1\}$ of cardinality $n$,
and for each $\sigma\in \Sigma'$ set $\bar \sigma = \{1\}\cup \sigma$.
By the Laplace expansion in complementary minors (of sizes $n\times n$ and $m\times m$)
\[
J_{1} = \sum_{\sigma \in \Sigma'} \sgn(\sigma) \left( \prod_{k\in \sigma} \<\beta_k, t\>\right) \left|\hat A_\sigma\right| \, \left|B_{\bar \sigma}\right|.
\]
We make three observations.
Firstly, since $\alpha_1 = (1,0, \dots, 0)^T$ we have that
$\left|A_{\bar \sigma}\right| = \left| \hat A_{\sigma}\right|$. Secondly, 
according to Lemma~\ref{lem:Schur}  (assuming that $C(B) = 1$)
we have that $\left| B_{\bar \sigma}\right| = \sgn(\bar \sigma)\left| A_{\bar \sigma}\right|$.
Thirdly, we have that $\sgn(\sigma) = \sgn(\bar \sigma)$.
All in all, we conclude that
\[
J_{1} = \sum_{\sigma \in \Sigma'}\left|\hat A_\sigma\right|^2 \Big( \prod_{k\in \sigma} \<\beta_k, t\>\Big).
\]
Since $\hat \alpha_1 = 0$, we have that $|\hat A_\sigma| = 0$ for any $\sigma$ with $1 \in \sigma$.
Hence, $J_1 = P_A(t)$.
\end{proof}

\subsection{Katz' Theorem}
\label{sec:Katz}
In the algebraic case, it follows from Katz' dimension formula \cite{Kat73a}, see also \cite[\S 1.5]{GKZ94},
that the Jacobian $J(\omega; t)$ has full rank at $(\omega; t)$ if and only if
the Hessian matrix $H_f(w)$ of $f = \Phi(\omega; t)$ at the point $w = -\omega$ is singular.
This statement remains true if one generalizes to exponential sums.
It is possible to obtain a proof if this fact by making the appropriate modifications to
the exposition in \cite[\S 1.5]{GKZ94}; since we have introduced the cuspidal form \eqref{eqn:CharacteristicPolynomial}
we prefer the following more direct approach.

\begin{theorem}
\label{thm:Hessian}
Let $f = \Phi(\omega; t) \in \cX_A$. Then, the Hessian matrix $H_f(w)$ 
is singular at $w = -\omega$ if and only if\/ $P_A(t) = 0$. In particular,
if\/ $\cX_A$ is a hypersurface and $f$ is a smooth point on $\cX_A$,
then the Hessian matrix $H_f(-\omega)$ is nonsingular.
\end{theorem}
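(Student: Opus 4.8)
The plan is to reduce the equivalence to a single Cauchy--Binet computation and then read off the final clause from Theorem~\ref{thm:CuspidalLocus}. First I would write the Hessian of $f(w) = \sum_k c_k\, e^{\<w, \alpha_k\>}$ explicitly. Differentiating twice in the variables $w_1, \dots, w_n$ gives, for $1 \le i, j \le n$,
\[
\frac{\partial^2 f}{\partial w_i\, \partial w_j}(w) = \sum_{k=1}^N c_k\, \hat\alpha_{ki}\, \hat\alpha_{kj}\, e^{\<w, \alpha_k\>},
\]
so that $H_f(w) = \hat A\, D(w)\, \hat A^T$, where $D(w)$ is the diagonal matrix whose $k$th entry is $c_k\, e^{\<w, \alpha_k\>}$.

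The key simplification comes from substituting the Horn--Kapranov data. Since $f = \Phi(\omega; t)$ means $c_k = \<\beta_k, t\>\, e^{\<\omega, \alpha_k\>}$, evaluating at $w = -\omega$ collapses each diagonal entry to $c_k\, e^{\<-\omega, \alpha_k\>} = \<\beta_k, t\>$, the two exponentials cancelling. Hence, up to a nonzero scalar,
\[
H_f(-\omega) = \hat A\, D\, \hat A^T, \qquad D = \operatorname{diag}\bigl(\<\beta_1, t\>, \dots, \<\beta_N, t\>\bigr),
\]
which no longer depends on $\omega$. This is the heart of the matter: at the singular point the Hessian becomes a symmetric factorization weighted by the linear forms $\<\beta_k, t\>$.

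Next I would apply the Cauchy--Binet formula with a diagonal weight to the $n\times N$ by $N\times n$ product $\hat A\, D\, \hat A^T$. This expresses $\det H_f(-\omega)$ as a sum over $\sigma \in \Sigma$ of $\bigl(\prod_{k\in\sigma}\<\beta_k,t\>\bigr)\, \det(\hat A_\sigma)\, \det\bigl((\hat A^T)_\sigma\bigr)$, and since $(\hat A^T)_\sigma = (\hat A_\sigma)^T$ this is precisely $\sum_{\sigma\in\Sigma} |\hat A_\sigma|^2 \prod_{k\in\sigma}\<\beta_k,t\> = P_A(t)$ by Definition~\ref{def:CharacteristicPolynomial}. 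Thus $\det H_f(-\omega)$ equals $P_A(t)$ up to a nonzero constant, and the first equivalence follows at once: $H_f(-\omega)$ is singular if and only if $P_A(t) = 0$. I should stress that this part is entirely elementary and needs no algebraicity, so it holds for arbitrary real (or complex) spectra.

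For the final clause I would combine this with Theorem~\ref{thm:CuspidalLocus}, which identifies the vanishing locus of $P_A(t)$ with the cuspidal locus; together with the first part this already shows that $H_f(-\omega)$ is singular exactly when $f$ is a cusp. When $\cX_A$ is a hypersurface it then remains to note that a smooth point cannot be a cusp, i.e.\ that the cuspidal locus lies in the singular locus of $\cX_A$. I expect this to be the only genuine obstacle, the Cauchy--Binet identity being routine. It can be settled by the classical reflexivity (biduality) of the pair $(X_A, \cX_A)$ --- the content of Katz's theorem recalled above: at a smooth point the image of $\d\Phi$ fills out the full tangent space $T_f \cX_A$ of dimension $n+m$, so $\d\Phi$ has full rank and $f$ is not a cusp. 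Contrapositively, a smooth point $f$ satisfies $P_A(t) \ne 0$, whence $H_f(-\omega)$ is nonsingular.
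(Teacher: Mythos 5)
Your proof is correct and follows essentially the same route as the paper: the paper also reduces to the singular point $w=-\omega$ (quotienting out the torus action to take $\omega=0$) and then expands $\det H_f(0)$ by multilinearity in the columns, which is precisely the Cauchy--Binet identity for $\hat A\, D\, \hat A^T$ that you invoke, yielding $\det H_f(-\omega) = P_A(t)$ up to a nonzero factor. The final clause is likewise treated as an immediate consequence of the first part together with Theorem~\ref{thm:CuspidalLocus} (the paper simply says ``we only need to prove the first part''), so your discussion of why a smooth point cannot be a cusp only makes explicit what the paper leaves implicit.
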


\begin{proof}
We only need to prove the first part.
As in the proof of Theorem~\ref{thm:CuspidalLocus} the torus action
can be quoted out. Hence, it suffices to consider the case $\omega = 0$, where the
singular point is located at $w = 0$. We have that
\[
f''_{kl}(0) = \sum_{j = 1}^N \alpha_{jk}\alpha_{jl}\< \beta_j, t\>, \quad 1 \leq k,l \leq n.
\]
In particular, the $p$th column $H_p$ of $H_f(1)$, for $p = 1, \dots, n$ is a sum of $N$ vectors
\[
H_p = \sum_{j = 1}^N\alpha_{jp}\< \beta_j, t\>\, \hat \alpha_j.
\]
For any two such columns, the $j$th summands are both multiples of $\hat \alpha_j$. 
In particular, we can expand the determinant
\[
\left|H_f(0)\right| = \sum_{\sigma \in \Sigma}\,  \sum_{\pi}\,
\left| \alpha_{k\pi(k)} \< \beta_{k}, t\>\, \hat \alpha_{k}\right|_{k\in \sigma}
\]
where $\pi$ runs over the set of all bijections $\pi \colon \sigma \rightarrow [n]$.
It follows that
\[
\left|H_f(0)\right| = \sum_{\sigma \in \Sigma}\, \Big( \prod_{k \in \sigma} \< \beta_{k}, t\> \Big)
\,  \sum_{\pi}\,\left| \alpha_{k\pi(k)}\, \hat \alpha_{k}\right|_{k\in \sigma}
=  \sum_{\sigma \in \Sigma}\, \Big( \prod_{k \in \sigma} \< \beta_{k}, t\> \Big)
\,  \left|\hat A_{\sigma}\right|^2,
\] 
which completes the proof.
\end{proof} 

\section{Properties of the Cuspidal Form}
\label{sec:CuspidalForm}

In this section we  investigate the cuspidal form $P_A(t)$
as a function of the spectrum $A$.

\subsection{Sub-configurations of $A$}
\label{ssec:SubConfigurations}

\begin{theorem}
\label{thm:RetsrictionOfPToHyperplane}
Let $\alpha \in A$ be such that $A \setminus \{\alpha\}$ has rank $1+n$,
let $t'$ be coordinates in $\beta_\alpha^*$ and $t = (t',t_m)$ coordinates
in $\CC^m$. Then,
\[
P_{A\setminus\{\alpha\}}(t') = P_A(t)\big|_{\beta_\alpha^*}.
\]
\end{theorem}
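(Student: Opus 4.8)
The plan is to exhibit a single pair of compatible Gale duals, one for $A$ and one for $A \setminus \{\alpha\}$, and then to read the identity directly off Definition~\ref{def:CharacteristicPolynomial}. The first point to note is that the hypothesis that $A\setminus\{\alpha\}$ has full rank $1+n$ forces $\beta_\alpha \neq 0$: by the equivalence recorded in Example~\ref{ex:PyramidHasTrivialCharacteristicPolynomial}, $\beta_\alpha = 0$ would mean $A\setminus\{\alpha\}$ lies in an affine space of dimension $n-1$, contradicting full rank. Hence $\beta_\alpha^*$ is a genuine hyperplane. Since $P_A(t)$ depends on the Gale dual only up to an affine change of coordinates in $\CC^m$, I am free to choose coordinates $t = (t', t_m)$ so that $\langle \beta_\alpha, t\rangle = t_m$, i.e. $\beta_\alpha^* = \{t_m = 0\}$. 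After reordering the columns we may assume $\alpha = \alpha_N$, and we write each row of $B$ as $\beta_k = (\beta_k', \beta_{k,m})$ with $\beta_k' \in \CC^{m-1}$; the normalization gives $\beta_N' = 0$ and $\beta_{N,m} = 1$.

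The key step, which I expect to be essentially the only obstacle, is to check that the truncated matrix $B'$ with rows $\beta_1', \dots, \beta_{N-1}'$ is a Gale dual of $A' := A\setminus\{\alpha\}$. The relation $AB = 0$, read off in its first $m-1$ columns, says $\sum_{k=1}^N \alpha_k (\beta_k')^T = 0$; because $\beta_N' = 0$, the $k=N$ term drops and this is exactly $A'B' = 0$, so $\operatorname{im}(B') \subseteq \ker(A')$. For the rank, suppose $B'c = 0$ for some $c \in \CC^{m-1}$ and set $\tilde c = (c,0) \in \CC^m$; then the first $N-1$ coordinates of $B\tilde c$ vanish by assumption and the last one vanishes because $\beta_N = (0,\dots,0,1)$, so $\tilde c \in \ker B$, whence $c = 0$ as $B$ has full rank $m$. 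Thus $B'$ is injective of rank $m-1 = (N-1)-(1+n) = \dim\ker(A')$, the inclusion $\operatorname{im}(B')\subseteq\ker(A')$ is an equality, and $B'$ is a Gale dual of $A'$.

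With this compatible pair in hand the identity is pure bookkeeping on the two defining sums. Restricting $P_A(t)$ to $\{t_m = 0\}$ turns each factor $\langle \beta_k, t\rangle$ into $\langle \beta_k', t'\rangle$ for $k < N$ and into $0$ for $k = N$; therefore every $\sigma \in \Sigma$ containing $N$ contributes nothing, and the sum collapses onto the subsets $\sigma \subset [N-1]$ of cardinality $n$, which is precisely the index set for $A'$. For such $\sigma$ the minors coincide, $|\hat A_\sigma|^2 = |\hat A'_\sigma|^2$, since deleting the $N$th column of $\hat A$ leaves the columns indexed by $\sigma \subset [N-1]$ untouched. Hence
\[
P_A(t)\big|_{\beta_\alpha^*} = \sum_{\substack{\sigma \subset [N-1]\\ \#\sigma = n}} \left|\hat A'_\sigma\right|^2 \prod_{k \in \sigma} \langle \beta_k', t'\rangle = P_{A'}(t'),
\]
the last equality being Definition~\ref{def:CharacteristicPolynomial} for $A'$ with the Gale dual $B'$. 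As both cuspidal forms are defined only up to the coordinate choice made above, this is the asserted equality.
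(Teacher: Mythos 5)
Your proof is correct and follows essentially the same route as the paper: both arguments reduce to choosing a Gale dual of $A$ in the block form $\bigl(\begin{smallmatrix} B' & * \\ 0 & 1 \end{smallmatrix}\bigr)$ with $B'$ a Gale dual of $A\setminus\{\alpha\}$, after which every term of $P_A(t)$ with $N\in\sigma$ picks up a factor $t_m$ and the remaining terms restrict to the defining sum of $P_{A\setminus\{\alpha\}}(t')$. The only difference is cosmetic: the paper asserts the existence of such a compatible block Gale dual without proof, whereas you verify it directly (normalizing $\beta_\alpha = (0,\dots,0,1)$ and checking that the truncation $B'$ has the right kernel and rank), which fills in a detail the paper leaves to the reader.
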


\begin{proof}
By assumption, if we write $A = (A', \alpha)$, then $A'$ is of rank $1+n$ 
and codimension $m-1$. We can chose a Gale dual $B$ of $A$ of the form
\[
B = \left(\begin{array}{cc} B' & * \\ 0 & 1 \end{array}\right),
\]
where $B'$ is a Gale dual of $A'$. The theorem follows from
that the restriction of $P_A(t)$ to $\beta_\alpha^*$ is given by $t_m = 0$.
First, for any $\sigma \in \Sigma$ with $N \in \sigma$,
we have that the term of $P_A(t)$ corresponding to $\sigma$
has the monomial $t_m$ as a factor.
Second, we have that $\<\beta_k, t\>$ restricted to $t_m = 0$
equals $\<\beta_k', t'\>$ for any $k = 1, \dots, N-1$.
\end{proof}

\begin{corollary}
\label{cor:ContainmentNontrivial}
If $A' \subset A$ is such that $P_{A'}$ is nontrivial, then $P_A$
is nontrivial.\hfill \qed
\end{corollary}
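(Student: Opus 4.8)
The plan is to prove Corollary~\ref{cor:ContainmentNontrivial} by reducing the general containment $A' \subset A$ to a chain of single-point deletions, and then applying Theorem~\ref{thm:RetsrictionOfPToHyperplane} at each step. Write $A \setminus A' = \{\alpha^{(1)}, \dots, \alpha^{(r)}\}$, and consider the intermediate configurations $A = A_0 \supset A_1 \supset \cdots \supset A_r = A'$, where $A_i = A_{i-1} \setminus \{\alpha^{(i)}\}$. The key idea is that Theorem~\ref{thm:RetsrictionOfPToHyperplane} exhibits $P_{A_i}$ as a \emph{restriction} of $P_{A_{i-1}}$ to a coordinate hyperplane $\beta_{\alpha^{(i)}}^*$, and the restriction of a polynomial to a linear subspace cannot be nontrivial unless the polynomial itself is nontrivial. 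Contrapositively, if $P_{A'} = P_{A_r}$ is nontrivial, then walking back up the chain forces each $P_{A_i}$, and in particular $P_{A_0} = P_A$, to be nontrivial.

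The first thing I would check is the rank hypothesis needed to invoke Theorem~\ref{thm:RetsrictionOfPToHyperplane} at each deletion. That theorem requires that $A_{i-1} \setminus \{\alpha^{(i)}\} = A_i$ still have rank $1+n$. This is where I expect the only real subtlety to lie. If some intermediate $A_i$ drops rank, the hypothesis fails and the restriction formula is unavailable. However, since the final configuration $A' = A_r$ has $P_{A'}$ nontrivial, $A'$ is in particular not a pyramid (by Example~\ref{ex:PyramidHasTrivialCharacteristicPolynomial}) and must have full rank $1+n$; otherwise its Newton polygon would be degenerate and, by the standing assumptions of \S\ref{sec:Gale}, the cuspidal form would not even be defined with the required degree. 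Since $A_i \supseteq A'$ for every $i$, each $A_i$ contains a full-rank subconfiguration and hence has rank $1+n$ itself. Thus the hypotheses of Theorem~\ref{thm:RetsrictionOfPToHyperplane} are met at every step, and one may order the deletions in any convenient way.

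With the rank issue dispatched, the induction is immediate. For the base case, $P_{A_r} = P_{A'}$ is nontrivial by assumption. For the inductive step, suppose $P_{A_i}$ is nontrivial; by Theorem~\ref{thm:RetsrictionOfPToHyperplane} applied to the point $\alpha^{(i)} \in A_{i-1}$ we have
\[
P_{A_i}(t') = P_{A_{i-1}}(t)\big|_{\beta_{\alpha^{(i)}}^*}.
\]
A polynomial whose restriction to a hyperplane is nonzero is itself nonzero, so $P_{A_{i-1}}$ is nontrivial. Descending the index from $r$ to $0$ yields that $P_A = P_{A_0}$ is nontrivial, as claimed.

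The step I expect to be the genuine obstacle is the rank bookkeeping described above, since it is the only place the argument could break: the restriction identity of Theorem~\ref{thm:RetsrictionOfPToHyperplane} is only valid under the full-rank hypothesis, and one must be certain it persists along the entire chain of deletions. Everything else is a routine one-variable-at-a-time induction. A cleaner alternative, which I would mention as a remark, is to avoid the chain entirely: by iterating the hyperplane restriction one sees that $P_{A'}$ is, up to a choice of coordinates on the relevant coordinate subspace $\bigcap_i \beta_{\alpha^{(i)}}^*$, equal to $P_A$ restricted to that subspace, and a single application of the ``restriction nonzero implies function nonzero'' principle finishes the proof in one stroke.
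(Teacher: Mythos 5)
Your proposal is correct and matches the paper's intended argument: the corollary is stated with an immediate \qed precisely because it follows by iterating Theorem~\ref{thm:RetsrictionOfPToHyperplane} one deleted point at a time, using that a polynomial with nonzero restriction to a linear subspace is itself nonzero. Your additional care with the rank hypothesis (each intermediate configuration contains the full-rank $A'$, so Theorem~\ref{thm:RetsrictionOfPToHyperplane} applies at every step) is a valid and worthwhile filling-in of a detail the paper leaves implicit.
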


\subsection{Diagonal and upper diagonal configurations}

\begin{definition}
\label{def:Diagonal}
We say that a point configuration $A$ is \emph{upper diagonal} if it can, after acting by
$\GL_{1+n}(\RR)$, be written in the
form
\begin{equation}
\label{eqn:DefIteratedCircuit}
A = \left(\begin{array}{ccccc}
1 & 1 & 1 & \cdots & 1\\
0 & \tilde A_1 & * & \cdots & *\\
0 & 0 & \tilde A_2 & \cdots & *\\
\vdots & \vdots & \vdots & \ddots & \vdots\\
0 & 0 & 0 & \cdots & \tilde A_j
\end{array}\right),
\end{equation}
where we associate to each $\tilde A_j$ the point configuration 
$A_j$ defined by $\hat A_j = (0, \tilde A_j)$. 
The configurations $A_j$ for $j=1, \dots m$ are called the \emph{diagonal configurations} of $A$.
If all elements marked by $*$ vanishes, then the point configuration is said to be \emph{diagonal}.
\end{definition}

\begin{remark}
A configuration $A$ is an \emph{iterated circuit} in the sense of Esterov \cite[{Def.~3.15}]{Est10} 
if and only if after applying an integer affine transformation it is an upper diagonal
configuration all of whose diagonal configurations are circuits.
\end{remark}

\begin{theorem}
\label{thm:CharacteristicProduct}
Let $A$ be a diagonal point configuration with diagonal configurations $A_1$, \dots, $A_m$.
Then, $P_A(t) = P_{A_1}(t_1) \cdots P_{A_m}(t_m)$.
\end{theorem}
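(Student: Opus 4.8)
The plan is to compute $P_A(t)$ directly from Definition~\ref{def:CharacteristicPolynomial} and show that the block-diagonal structure of $A$ forces the sum over $\Sigma$ to collapse into a product. First I would fix a Gale dual adapted to the block structure. Because $A$ is diagonal, each diagonal configuration $A_j$ sits in a disjoint block of columns, say with column index set $I_j \subset [N]$, and these blocks together with the top row account for the full rank $1+n$. Each $A_j$ has its own codimension $m_j$, and $\sum_j m_j = m$. The key structural observation is that a Gale dual $B$ can be chosen in block form so that the coordinates $t$ split as $t = (t_1, \dots, t_m)$ with $t_j$ the coordinates associated to the $j$th block, and so that $\beta_k$, for $k \in I_j$, involves only the coordinates $t_j$; that is, $\<\beta_k, t\> = \<\beta_k', t_j\>$ depends only on $t_j$. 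This is the exponential-sum analogue of the fact that Gale duality respects direct sums, and it should follow by extending $A$ to $\bar A$ in block-triangular form as in \eqref{eqn:UnimodularExtensions} and reading off $\bar B$.

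Next I would analyze which $\sigma \in \Sigma$ contribute a nonzero term. A term survives only when $|\hat A_\sigma|^2 \neq 0$, so I need to understand when the $n\times n$ cofactor $\hat A_\sigma$ is nonsingular. The crucial combinatorial point is a counting/support argument: writing $n_j$ for the number of rows contributed by block $j$ (so that $\sum_j n_j = n$), the block-diagonal shape of $\hat A$ means that $|\hat A_\sigma| \neq 0$ forces $\sigma$ to select exactly $n_j$ columns from each block $I_j$. If $\sigma$ takes too few columns from some block, the corresponding rows are linearly dependent and the cofactor vanishes; by the pigeonhole count, taking too few from one block forces too many from another, where again dependence appears. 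Hence the surviving $\sigma$ are exactly those of the form $\sigma = \sigma_1 \sqcup \cdots \sqcup \sigma_m$ with $\sigma_j \subset I_j$ of size $n_j$, and for such $\sigma$ the determinant factors as $|\hat A_\sigma| = \pm \prod_j |\hat A_{j,\sigma_j}|$ by block-diagonal Laplace expansion (the $\pm$ squares away harmlessly since we take $|\hat A_\sigma|^2$).

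Granting these two facts, the computation is a routine regrouping. Since the product $\prod_{k\in\sigma}\<\beta_k,t\>$ splits as $\prod_j \prod_{k\in\sigma_j}\<\beta_k', t_j\>$ and $|\hat A_\sigma|^2 = \prod_j |\hat A_{j,\sigma_j}|^2$, the sum over the surviving $\sigma$ factors as an iterated sum over the $\sigma_j$, giving
\[
P_A(t) = \prod_{j=1}^m \Bigg( \sum_{\sigma_j} |\hat A_{j,\sigma_j}|^2 \prod_{k\in\sigma_j}\<\beta_k', t_j\> \Bigg) = \prod_{j=1}^m P_{A_j}(t_j),
\]
where each inner sum is exactly $P_{A_j}(t_j)$ by Definition~\ref{def:CharacteristicPolynomial} applied to the diagonal configuration $A_j$ with its induced Gale dual $B_j = B'|_{I_j}$.

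The main obstacle I expect is the rank bookkeeping underlying the claim that $|\hat A_\sigma| \neq 0$ forces $\sigma$ to distribute as $n_j$ columns per block. One must verify that each diagonal configuration has full rank in its own coordinates (so that $\hat A_{j,\sigma_j}$ is even capable of being nonsingular) and carefully track how the all-ones top row and the leading zero column in each $\hat A_j = (0, \tilde A_j)$ interact with the block decomposition. A clean way to handle this is to pass to $\hat A$ with the top row deleted, note that its nonzero rows are supported blockwise, and argue that any column selection achieving full row rank must hit every block in its full dimension; the constant $C(B)$ and the sign conventions from Lemma~\ref{lem:Schur} only affect the squared determinant trivially, so they do not enter. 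Once the support/rank structure is pinned down, the factorization is essentially forced.
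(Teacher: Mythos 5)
Your proposal is correct and follows essentially the same route as the paper's proof: choose a block-form Gale dual so the linear forms $\<\beta_k,t\>$ separate by block, observe that $|\hat A_\sigma| \neq 0$ forces $\sigma$ to take exactly $n_j$ columns from each block (whence the minor factors blockwise), and regroup the sum into a product. The only cosmetic difference is that the paper reduces to the two-block case by induction on the number of diagonal configurations, whereas you treat all blocks simultaneously; the substance is identical.
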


\begin{proof}
By induction, it suffices to consider the case $m=2$.
For a minor $|\hat A_\sigma|$ of $\hat A$ to be nonvanishing, it must hold that the set $\sigma \in \Sigma$
consists of $n_j$ columns corresponding to points in $\tilde A_j$ for $j=1,2$. Therefor, for each non-vanishing term
of $P_A(t)$ we have the factorization
\[
|A_\sigma| = |A_{\sigma_1}| \, |A_{\sigma_2}|
\]
where $\sigma_j \in \Sigma_j$ for $j=1,2$ and $\sigma = \sigma_1 \cup \sigma_2$.
We can write $A_j$ and its Gale dual in the block forms
\[
A_j = \left( \begin{array}{cc} 1 & 1 \\ 0 & \tilde A_j \end{array}\right)
\quad \text{ and } \quad
B_j = \left( \begin{array}{c} * \\ \tilde B_j \end{array}\right).
\]
Then, a Gale dual of $A$ can be written as
\[
B = \left(\begin{array}{cc}
 * & *  \\ 
 \tilde B_1 & 0 \\ 
 0 & \tilde B_2 
 \end{array}\right).
\]
Hence, for each $\sigma = \sigma_1 \cup \sigma_2$ it holds that
\[
\prod_{k\in \sigma} \< \beta_k, t\> = \left(\prod_{k\in \sigma_1} \< \beta_k, t_1\> \right)\left(\prod_{k\in \sigma_2} \< \beta_k, t_2\> \right),
\]
where we, by abuse of notation, interpret $\beta_k$ as a row both of $B$ and 
of $\tilde B_j$ when $k \in \sigma_j$.
\end{proof}

\begin{remark}
\label{rem:IteratedCircuitNonTrivial}
Let $A$ be an iterated circuit.
By Example~\ref{ex:PinCodimensionOne}, we have that 
$P_{A_j}(t_j)$ is a non-trivial monomial in the single variable $t_j$ of degree $n_j$ for each $j=1, \dots, m$.
It is straightforward to check that 
the coefficient of the monomial $t_1^{n_1}t_2^{n_2}\cdots t_m^{n_m}$ is unchanged if one deletes all the elements
of $A$ marked by a star in \eqref{eqn:DefIteratedCircuit}. In particular, it follows from
Theorem~\ref{thm:CharacteristicProduct}
that the monomial of $P_A(t)$ with exponent vector $(n_1, n_2, \dots, n_m)$ is given by
$P_{A_1}(t_1) \cdot P_{A_2}(t_2)\cdots P_{A_m}(t_m)$.
In particular, $P_A(t)$ is nontrivial in this case.
\end{remark}

\section{Linear Factors}
\label{sec:LinearFactors}

\begin{corollary}
\label{cor:LinearFormDividesP}
Let $\alpha \in A$.
If the cuspidal form $P_{A\setminus\{\alpha\}}$ is trivial, then $\<\beta_\alpha, t\>$ divides $P_A(t)$.
\end{corollary}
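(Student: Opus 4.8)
The statement should follow directly from Theorem~\ref{thm:RetsrictionOfPToHyperplane} together with the elementary fact that a polynomial is divisible by a nonzero linear form precisely when it vanishes on the corresponding hyperplane. The plan is to split into two cases according to whether $\beta_\alpha$ vanishes, reducing the substantive case to an application of the restriction theorem.

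First I would dispose of the degenerate case $\beta_\alpha = 0$. By Example~\ref{ex:PyramidHasTrivialCharacteristicPolynomial} this is equivalent to $A$ being a pyramid with apex $\alpha$, which in turn is equivalent to $A \setminus \{\alpha\}$ having rank $n$ rather than $1+n$. In this situation $P_A(t)$ is itself trivial, so the asserted divisibility holds vacuously. Hence we may assume $\beta_\alpha \neq 0$, and this is precisely the condition that $A \setminus \{\alpha\}$ has rank $1+n$, which is the hypothesis required to invoke Theorem~\ref{thm:RetsrictionOfPToHyperplane}.

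In the main case I would choose coordinates $t = (t', t_m)$ on $\CC^m$ adapted to $\beta_\alpha$, so that the hyperplane $\beta_\alpha^*$ is cut out by $t_m = 0$ and $\<\beta_\alpha, t\>$ is a nonzero scalar multiple of $t_m$; concretely, one selects the Gale dual whose row indexed by $\alpha$ equals $(0, \dots, 0, 1)$, exactly as in the proof of Theorem~\ref{thm:RetsrictionOfPToHyperplane}, so that $\<\beta_\alpha, t\> = t_m$. Applying that theorem then yields
\[
P_A(t)\big|_{t_m = 0} = P_A(t)\big|_{\beta_\alpha^*} = P_{A\setminus\{\alpha\}}(t') = 0,
\]
where the last equality is the hypothesis that $P_{A\setminus\{\alpha\}}$ is trivial. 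Thus $P_A(t', 0)$ is the zero polynomial, so every monomial of $P_A$ carries a factor of $t_m$; that is, $t_m$ divides $P_A(t)$, and therefore $\<\beta_\alpha, t\>$ divides $P_A(t)$.

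The only point genuinely requiring care is the reduction that simultaneously pins down $\beta_\alpha \neq 0$ and licenses the coordinate identification $\<\beta_\alpha, t\> = t_m$: one must confirm that the full-rank hypothesis of Theorem~\ref{thm:RetsrictionOfPToHyperplane} coincides with $A$ not being a pyramid at $\alpha$, so that the two cases above are exhaustive and the proposed divisor is a bona fide nonzero linear form rather than a degenerate one. Since divisibility by a linear form depends only on the hyperplane it cuts out, it is insensitive to the residual affine freedom in the choice of Gale dual, so verifying it for the convenient dual above suffices. Once this bookkeeping is settled, the conclusion is immediate from the vanishing of the restriction, with no further manipulation of the expansion \eqref{eqn:CharacteristicPolynomial} needed.
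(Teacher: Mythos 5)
Your proof is correct and follows essentially the same route as the paper: the paper's proof is the one-line observation that the claim follows from Theorem~\ref{thm:RetsrictionOfPToHyperplane} together with the Nullstellensatz (i.e., a polynomial vanishing on the hyperplane $\beta_\alpha^*$ is divisible by the linear form $\<\beta_\alpha, t\>$). Your explicit treatment of the degenerate case $\beta_\alpha = 0$, where the rank hypothesis of Theorem~\ref{thm:RetsrictionOfPToHyperplane} fails but $P_A(t)$ is itself trivial by Example~\ref{ex:PyramidHasTrivialCharacteristicPolynomial}, is a welcome piece of bookkeeping that the paper leaves implicit.
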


\begin{proof}
This follows from Theorem~\ref{thm:RetsrictionOfPToHyperplane} 
and Hilbert's Nullstellensatz.
\end{proof}

\begin{example}
Consider the point configuration and Gale dual
\[
A = \left( \begin{array}{ccccc}
1 & 1 & 1 & 1 & 1\\
0 & 1 & 2 & 0 & 1\\
0 & 0 & 0 & 1 & 2
\end{array}\right)
\quad \text{and} \quad
B^T = \left(\begin{array}{rrrrr}
2 & -1 & 0 & -2 & 1 \\ 1 & -2 & 1 & 0 & 0 
\end{array}\right).
\]
Deleting any of the two last columns of $A$ we obtain a pyramid,
which has trivial cuspidal form by Example~\ref{ex:PyramidHasTrivialCharacteristicPolynomial}.
Notice that the corresponding rows
of the Gale dual $B$ are parallel. The cuspidal form
in this case is $P_A(t) = 4 t_1(t_2-t_1)$. In particular, Corollary~\ref{cor:LinearFormDividesP} can not be extended to a bijective
correspondence between $\alpha \in A$ with $P_{A\setminus\{\alpha\}}$
trivial and linear factors of $P_A(t)$.
\end{example}

\begin{proposition}
\label{pro:ParallelRows}
Let $A$ be a point configuration 
such that $P_A(t)$ is non-trivial.
Assume that one (and therefor every) Gale dual $B$
has a set of\/ $k$ parallel rows $\beta_1, \dots, \beta_k$. Then,
$\<\beta, t\>^{k-1}$ divides $P_A(t)$.
If, in addition, $\beta_1 + \dots + \beta_k = 0$, then $\<\beta, t\>^k$ divides $P_A(t)$.
\end{proposition}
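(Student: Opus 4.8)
The plan is to work in coordinates adapted to the parallel class and to read off the order of vanishing of $P_A(t)$ along the single hyperplane they determine. Write $\beta_i = \lambda_i \beta$ for $i = 1,\dots,k$, put $\ell = \langle \beta, t\rangle$, and note that $\beta_1^* = \dots = \beta_k^* = \{\ell = 0\}$. After an affine change of coordinates in $\CC^m$ I may assume $\beta = e_m$, so that $\langle \beta_i, t\rangle = \lambda_i\, t_m$ for $i \le k$ and $\ell = t_m$; the two assertions then read $t_m^{k-1} \mid P_A$ and $t_m^{k} \mid P_A$. Grouping the terms of \eqref{eqn:CharacteristicPolynomial} according to $j(\sigma) = \#\big(\sigma \cap \{1,\dots,k\}\big)$, each term picks up the explicit factor $t_m^{j(\sigma)}$, so everything comes down to controlling the minors $\lvert \hat A_\sigma\rvert$ for small $j(\sigma)$.

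First I would show that only the terms with $j(\sigma) \ge k-2$ survive. Choose a vector $u$ spanning the one-dimensional quotient $\ker \hat A / \ker A$ and set $\hat B = (B \mid u)$; since the columns of $B$ span $\ker A$ and $\hat A$ has rank $n$, this $\hat B$ is a Gale dual of $\hat A$. Lemma~\ref{lem:Schur}, applied to $\hat A$, then expresses $\lvert \hat A_\sigma\rvert$, up to sign and a fixed nonzero constant, as the complementary maximal minor of $\hat B$ on the rows indexed by $[N]\setminus\sigma$. The augmented rows $\hat\beta_i = (\lambda_i \beta,\, u_i)$ with $i \le k$ all lie in the $2$-dimensional subspace spanned by $(\beta,0)$ and $(0,1)$. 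Hence, whenever $[N]\setminus\sigma$ contains three or more of the indices $1,\dots,k$ — that is, whenever $j(\sigma) \le k-3$ — three of the rows of that minor are linearly dependent and $\lvert \hat A_\sigma\rvert = 0$. Consequently $t_m^{k-2} \mid P_A$ at once, and the problem reduces to the vanishing, along $\{t_m = 0\}$, of the coefficient of $t_m^{k-2}$ (and, for the addendum, also of the coefficient of $t_m^{k-1}$).

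The heart of the matter is the level $j(\sigma) = k-2$. Here $[N]\setminus\sigma$ meets $\{1,\dots,k\}$ in exactly two indices $p,q$, and because $\hat\beta_p,\hat\beta_q$ are supported in the last two coordinates, a Laplace expansion of the complementary minor along these two rows factors $\lvert \hat A_\sigma\rvert^2$ as $(\lambda_p u_q - \lambda_q u_p)^2$ times the square of a minor built only from the non-parallel rows. Summing over $\sigma$, the restriction to $\{t_m = 0\}$ of the coefficient of $t_m^{k-2}$ factors as a scalar $S$ times a polynomial $T$, where
\[
S \;=\; \Big(\sum_{i=1}^k \lambda_i\Big)\Big(\sum_{i=1}^k \frac{u_i^2}{\lambda_i}\Big) \;-\; \Big(\sum_{i=1}^k u_i\Big)^2 ,
\]
and $T$ is, up to a constant, the cuspidal form of the sub-configuration of the non-parallel points. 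I would then argue that this product vanishes by a dichotomy: if that sub-configuration is not pseudo-homogeneous, then $u$ is forced to be proportional to $(\lambda_i)_{i\le k}$ on the parallel class, so every factor $\lambda_p u_q - \lambda_q u_p$ vanishes and $S = 0$; if it is pseudo-homogeneous, then it is a pyramid and $T$ is trivial by Example~\ref{ex:PyramidHasTrivialCharacteristicPolynomial}. Either way the coefficient vanishes and $t_m^{k-1}\mid P_A$. (For the top order alone there is a cleaner route: Theorem~\ref{thm:RetsrictionOfPToHyperplane} gives $P_A\big|_{\{t_m=0\}} = P_{A\setminus\{\alpha_1\}}$, which is trivial because $\alpha_2,\dots,\alpha_k$ become pyramid apices, already settling $k=2$.)

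For the addendum, the hypothesis $\beta_1 + \dots + \beta_k = 0$ forces $\sum_{i\le k}\lambda_i = 0$, which kills the first summand of $S$; I would then run the same Laplace analysis one order higher, on the terms with $j(\sigma) = k-1$ (whose complementary minors contain a single in-plane row $\hat\beta_p$), and show that the weighted sum over the omitted index $p$ carries the overall factor $\sum_{i\le k}\lambda_i = 0$, so that the coefficient of $t_m^{k-1}$ also vanishes and $t_m^{k}\mid P_A$. The main obstacle throughout is precisely this boundary computation: since the termwise vanishing of the previous paragraph only reaches $t_m^{k-2}$, genuine cancellation among the surviving minors is unavoidable, and the delicate point is to certify that the restricted coefficient of $t_m^{k-2}$ (respectively $t_m^{k-1}$) is identically zero — the cleanest mechanism I see being the Laplace factorization above together with the pseudo-homogeneity dichotomy.
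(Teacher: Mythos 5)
Your opening reduction is sound: $\hat B=(B\mid u)$ is indeed a Gale dual of $\hat A$, Lemma~\ref{lem:Schur} applies to it, the terms of \eqref{eqn:CharacteristicPolynomial} with $j(\sigma)\le k-3$ vanish, and your factorization of the coefficient of $t_m^{k-2}$ as $S\cdot T$ (with your formula for $S$) is correct. The gap is in the argument that $S\cdot T=0$. First, $T$ is \emph{not} the cuspidal form of the non-parallel sub-configuration: by Lemma~\ref{lem:Schur} its coefficients are the squares of the \emph{full} maximal minors of that sub-configuration, homogenizing row of ones included, so $T$ is homogeneous of degree $n-k+2$ in $t'$, whereas the cuspidal form of an $(n-k+1)$-dimensional configuration has degree $n-k+1$; Example~\ref{ex:PyramidHasTrivialCharacteristicPolynomial} therefore says nothing about $T$. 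Second, the dichotomy itself is false: pseudo-homogeneity of the non-parallel points does not make them a pyramid. Concretely, translate the example of \S\ref{sec:LinearFactors} by $(0,1)$, i.e.\ take $\hat A=\left(\begin{smallmatrix}0&1&2&0&1\\1&1&1&2&3\end{smallmatrix}\right)$ with the same $B$; the parallel class is $\{4,5\}$ with $(\lambda_4,\lambda_5)=(-2,1)$, and the non-parallel points $(0,1),(1,1),(2,1)$ lie on the line $y=1$ (so they are pseudo-homogeneous), are not a pyramid, and have nontrivial cuspidal form. Taking $u=(-2,0,0,1,0)$ one gets $u|_{\{4,5\}}=(1,0)$, not proportional to $(\lambda_4,\lambda_5)$, so $S\neq 0$: your dichotomy lands in its second branch with a false justification, and no argument remains for the required vanishing. (The vanishing is true, but for a different reason: $T\equiv 0$ holds \emph{unconditionally}, because via Lemma~\ref{lem:Schur} the sum defining $T$ is a Laplace expansion of a square $(N-k)\times(N-k)$ determinant whose rows are dependent --- the all-ones row of the sub-configuration, after its columns are scaled by $\<\beta_j',t'\>$, equals the $t'$-combination of the rows of the transposed Gale dual. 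Some such identity is exactly what your proof is missing.)

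The addendum suffers from the same defect one degree higher. At level $j(\sigma)=k-1$ the complementary minor expands as $\lambda_qM_1\mp u_qM_2$, where $q$ is the unique class index outside $\sigma$ and $M_1,M_2$ are the minors of the non-class rows keeping the $u$-column, respectively the $\beta$-column; summing the squares against the explicit factor $\prod_{i\neq q}\lambda_i$ yields three groups of terms weighted by $\sum_i\lambda_i$, $\sum_iu_i$, and $\sum_iu_i^2/\lambda_i$. Only the first carries the factor $\sum_i\lambda_i$, so the hypothesis $\beta_1+\dots+\beta_k=0$ kills one group of three, and the vanishing of the other two (involving $\sum_{\sigma'}M_1M_2\prod\<\beta_j',t'\>$ and $\sum_{\sigma'}M_2^2\prod\<\beta_j',t'\>$) requires separate identities that you neither state nor prove. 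Compare with the paper's proof, which removes both difficulties at the source: using the $\GL_{1+n}(\RR)$-action one normalizes $A$ as in \eqref{eqn:ParallelRowsOfBMatrices}, so that the non-parallel points span a coordinate subspace \emph{through the origin}; then the last $k-1$ rows of $\hat A$ are supported on the class columns, every minor with fewer than $k-1$ class columns vanishes termwise (no cancellation is needed at level $k-2$ at all), and the coefficient of $t_m^{k-1}$ comes out in closed form as $(-1)^{k-1}\gamma_1\cdots\gamma_{k-1}(1-|\gamma|)\,P_{A'}(t')$, from which both assertions follow at once.
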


\begin{proof}
Let $\beta_j = \gamma_j \beta$ for some $\beta \neq 0$ and scalars $\gamma_j$ for $j= 1, \dots, k$.
Let the columns $\alpha_1, \dots, \alpha_k$ of $A$ correspond do the $k$ rows $\beta_1, \dots, \beta_k$
of $B$. By Lemma~\ref{lem:Schur}, any maximal minor of $A$ not containing
all but at most one of the columns $\alpha_1, \dots, \alpha_k$ vanishes.
In particular, the remaining points $\hat \alpha_{k+1}, \dots, \hat \alpha_N$ are
contained in some $n-k+1$-dimensional affine subspace. Since $A$ has full rank we can write
$A$ in the form (where we have reordered so that $\alpha_1, \dots, \alpha_k$ are the $k$ last columns)
\begin{equation}
\label{eqn:ParallelRowsOfBMatrices}
A = \left(\begin{array}{ccc}  1 & 1 & 1 \\   \hat A' & * & * \\   0 & I_{k-1} & \gamma \end{array}\right)
\quad \text{and} \quad
B = \left(\begin{array}{cc}  B' & * \\ 0 & - \gamma \\ 0 & 1\end{array}\right).
\end{equation}
Here, $A'$ is an $(n-k+2)\times (N-k)$-matrix with Gale dual $B'$, and $\gamma \in \RR^{k-1}$.
Set $|\gamma| = \gamma_1 + \dots + \gamma_{k-1}$.
We have that $\gamma_j \neq 0$ for each $j$, as $A$ is not a pyramid.
Under these assumptions $\beta = (0, \dots, 0, 1)$,
so that $\<\beta, t\> = t_m$.

To prove the first part of the proposition, we note that any maximal minor of $\hat A$ which does not contain at least $k-1$ of the rightmost $k$ columns vanishes. 
Hence, each nonvanishing term of \eqref{eqn:CharacteristicPolynomial} is divisible by $t_m^{k-1}$.

Let us now compute the coefficient of $t_m^{k-1}$. 
If we pick a maximal minor of $\hat A$ containing all of the $k$ last columns, then the corresponding monomial is divisible by $t_m^k$. Hence, we need only to consider the maximal minors of $\hat A$ containing exactly $k-1$ of the last $k$ columns.
The determinant of a $(k-1)\times(k-1)$ submatrix of $(I_{k-1}, \gamma)$
obtained by deleting the $j$th column is, up to sign, equal to $\gamma_j$ if $j = 1, \dots, {k-1}$, 
and it is equal to $1$ if $j = k$. 
Notice, also, that these $k-1$ columns gives a factor $t_m^{k-1}$ of the corresponding term of $P_A(t)$. Hence, when computing the coefficient of $t_m^{k-1}$ in $P_A(t)$ one should, for the remaining $n-k+1$ columns of the maximal minor in question, replace the factor $\<\beta_\alpha, t\>$
by $\<\beta_\alpha', t'\>$ where $t' = (t_1, \dots, t_{m-1})$ and $\beta'_\alpha$
is the corresponding row of $B'$.
Let $\Sigma'$ denote the set of all subsets
of $[N-k]$ of size $n-k+1$. 
All in all, we find that the coefficient of $t_m^{k-1}$ in the cuspidal form $P_A(t)$ is
\[
\sum_{\sigma \in \Sigma'} \, \left| \hat A'_\sigma \right|^2\,\Big( \prod_{j \in \sigma} \<\beta'_j, t'\>\Big)\,
\left( \bigg(\prod_{j=1}^{k-1} (-\gamma_j) \bigg) + \sum_{i=1}^{k-1} \gamma_i^2 \prod_{j \neq i} (-\gamma_j)  \right) = 
(-1)^{k-1}\,\gamma_1 \cdots \gamma_{k-1} \,\left( 1 - |\gamma|\right) \, P_{\hat A}(t').
\]
Thus, if $|\gamma| = 1$, which is equivalent to the original assumption that
 $\beta_1 + \dots + \beta_k = 0$,
then $P_A(t)$ is divisible by $t_m^k$.
\end{proof}

\begin{example}
Let us stay in the situation considered in Proposition~\ref{pro:ParallelRows}.
The coefficient of $t_m^{k-1}$ obtained at the end of that proof is
\[
(-1)^{k-1}\,\gamma_1 \cdots \gamma_{k-1} \,\left( 1 - |\gamma|\right) \, P_{\hat A}(t'),
\]
where $\gamma_j \neq 0$ for $j = 1, \dots, {k-1}$. It can happen that
this coefficient vanishes even if $|\gamma| \neq 1$, as  $P_{\hat A}(t')$
can be trivial. For example, consider the point configuration
and Gale dual
\[
A = \left( \begin{array}{ccccccccc}
1 & 1 & 1 & 1 & 1 & 1 & 1 & 1 & 1\\
0 & 1 & 0 & 0 & 1 & 1 & 0 & 0 & 1\\
0 & 0 & 1 & 0 & 1 & 0 & 0 & 0 & 1\\
0 & 0 & 0 & 1 & 0 & 1 & 0 & 0 & 1\\
0 & 0 & 0 & 0 & 0 & 0 & 1 & 0 & 3\\
0 & 0 & 0 & 0 & 0 & 0 & 0 & 1 & 2
\end{array}\right)
\quad \text{and} \quad
B = \left(\begin{array}{rrr}
1 & 1 & 7 \\
-1 & -1 & -1 \\
-1 & 0 & -1\\
0 & -1 & -1\\
1 & 0 & 0 \\
0 & 1 & 0 \\
0 & 0 & -3\\
0 & 0 & -2\\
0 & 0 & 1
\end{array}\right).
\]
Here, that last three rows of the Gale dual are parallel,
but they do not sum to zero.
The cuspidal form corresponding to this choice of Gale dual is
\[
P_A(t) = 6 \,t_3^3\, \left(7 t_3^2 + 2t_2 t_3 - 5 t_2^2 + 2t_1t_3 + 2t_1t_2-5t_1^2\right).
\]
\end{example}

\section{Defect Duals}
\label{sec:DualDefect}

A point configuration $A$ and, in the algebraic case, the toric variety $X_A$,
 is said to be \emph{dual defective} if the dual variety $\cX_A$
has codimension at least $2$. The following theorem is an immediate consequence
of the properties of the Jacobian matrix.

\begin{theorem}
\label{thm:DualDefectTrivialP}
The point configuration $A$ is dual defective if and only if\/ $P_A(t)$ is trivial.
\end{theorem}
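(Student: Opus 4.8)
The plan is to read off dual defectiveness from the generic rank of the Jacobian $J_\Phi$ and then convert this into a statement about $P_A(t)$ by means of Theorem~\ref{thm:CuspidalLocus}. First I would recall that, by Theorem~\ref{thm:HornKapranov}, the variety $\cX_A$ is the (closure of the) image of the parametrization $\Phi \colon (\CC^\times)^n \times \CC^m_B \to \cX_A$. For such an analytic map the dimension of the image equals the maximal rank attained by the differential; thus $\dim \cX_A$ equals the generic rank of the $(n+m)\times N$ matrix $J_\Phi(\omega; t)$. As noted in the proof of Theorem~\ref{thm:CuspidalLocus}, this rank is independent of $\omega$ and is at most $n+m$, the number of rows. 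By definition $A$ is dual defective precisely when $\cX_A$ has codimension at least $2$, that is, when $\dim \cX_A \leq N-2 = n+m-1$; equivalently, when $J_\Phi(\omega; t)$ has rank at most $n+m-1$ for every $(\omega; t)$.

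Next I would apply Theorem~\ref{thm:CuspidalLocus} directly. It asserts that $f = \Phi(\omega; t)$ is a cusp --- equivalently, that $J_\Phi(\omega; t)$ has rank at most $n+m-1$ --- if and only if $P_A(t) = 0$. Combining this with the previous paragraph, $A$ is dual defective if and only if $P_A(t) = 0$ for every $t \in \CC^m_B$.

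Finally I would observe that $\CC^m_B$ is the complement of the central hyperplane arrangement $\H_B$, and hence a Zariski-dense open subset of $\CC^m$. Since $P_A$ is a polynomial in $t$, it vanishes on this dense set if and only if it vanishes identically, i.e., if and only if $P_A(t)$ is trivial. This proves both directions. I do not expect a genuine obstacle: the substance is entirely contained in Theorem~\ref{thm:CuspidalLocus}, and the remaining ingredients --- the identification of $\dim \cX_A$ with the generic rank of the differential of $\Phi$, and the density of $\CC^m_B$ --- are routine.
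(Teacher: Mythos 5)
Your proposal is correct and follows essentially the same route as the paper: both reduce dual defectiveness to the (generic) rank of the Jacobian $J_\Phi$ of the Horn--Kapranov parametrization, then use Theorem~\ref{thm:CuspidalLocus} (i.e., the identification of the maximal minors with $P_A(t)$) together with the density of $\CC^m_B$ in $\CC^m$ to pass between generic vanishing and identical vanishing of $P_A$. The only cosmetic difference is that the paper phrases the converse direction via a smooth point of the hypersurface $\cX_A$ at which $J_\Phi$ attains full rank $n+m$, while you appeal directly to the formula ``generic rank of the differential equals $\dim \cX_A$''; the underlying fact is the same.
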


\begin{proof}
If $A$ is dual defective, then the rank of the Jacobian matrix $J(w;t)$
is at most $n+m-1$ for generic $(w;t)$. It follows that its maximal minors,
which are polynomial in $(w;t)$, vanishes for generic $(w;t)$, and hence
they vanishes identically.
Conversely, if $\cX_A$ is a hypersurface, as the smooth locus of $\cX_A$
is nonempty, we can find a point $f = \Phi(w; t)$ 
for which the Jacobian matrix $J(w; t)$ has rank $n+m$.
\end{proof}

\begin{example}
Let $A$ be a pyramid. We saw in Example  \ref{ex:PyramidHasTrivialCharacteristicPolynomial}
that the polynomial $P_A(t)$ is trivial,
implying (the well-known result) that $X_A$ is dual defective. 
\end{example}

\begin{proposition}
\label{pro:SubsetNontrivial}
If $A$ has a subset $A'$, of full dimension, such that $A'$ is not dual defective,
then $A$ is not dual defective.
\end{proposition}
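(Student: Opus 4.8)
The plan is to reduce the claim to the already-established equivalence between dual defectiveness and triviality of the cuspidal form (Theorem~\ref{thm:DualDefectTrivialP}), so that the whole statement becomes a statement about nontriviality of $P_A(t)$ propagating from a subconfiguration to the full configuration. Concretely, suppose $A' \subset A$ is a full-dimensional subconfiguration that is not dual defective. By Theorem~\ref{thm:DualDefectTrivialP} applied to $A'$, this means precisely that $P_{A'}(t)$ is nontrivial. I would then invoke the containment result for cuspidal forms, namely Corollary~\ref{cor:ContainmentNontrivial}, which asserts exactly that nontriviality of $P_{A'}$ forces nontriviality of $P_A$. A final application of Theorem~\ref{thm:DualDefectTrivialP}, now to $A$, translates nontriviality of $P_A(t)$ back into the statement that $A$ is not dual defective, which is the desired conclusion.

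The one technical point I would check before quoting Corollary~\ref{cor:ContainmentNontrivial} is the hypothesis on ranks. The corollary is a consequence of Theorem~\ref{thm:RetsrictionOfPToHyperplane}, which restricts $P_A$ to the hyperplane $\beta_\alpha^*$ one deleted point $\alpha$ at a time, and that theorem requires at each stage that deleting the point keeps the configuration of full rank $1+n$. Since $A'$ is assumed to be of full dimension, it already has rank $1+n$, and every intermediate configuration $A''$ with $A' \subset A'' \subset A$ contains $A'$ and hence also has full rank. Thus the chain of single-point deletions from $A$ down to $A'$ stays within the hypotheses of Theorem~\ref{thm:RetsrictionOfPToHyperplane}, and Corollary~\ref{cor:ContainmentNontrivial} applies without obstruction. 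I would note this explicitly, since ``full dimension'' in the hypothesis is exactly what guarantees the rank condition.

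The proof is therefore essentially a three-line chain of implications, and I do not anticipate a serious obstacle; the content has already been absorbed into the earlier results. The only place where one must be slightly careful is in confirming that Corollary~\ref{cor:ContainmentNontrivial} is stated in the generality needed here, i.e.\ for an arbitrary subset $A'$ rather than a single-point deletion. Inspecting its proof, the corollary is obtained by iterating the restriction in Theorem~\ref{thm:RetsrictionOfPToHyperplane}, so it does handle arbitrary subsets; the induction simply peels off the points of $A \setminus A'$ successively. With that verified, the argument reads: $A'$ not dual defective $\Rightarrow P_{A'}$ nontrivial (Theorem~\ref{thm:DualDefectTrivialP}) $\Rightarrow P_A$ nontrivial (Corollary~\ref{cor:ContainmentNontrivial}) $\Rightarrow A$ not dual defective (Theorem~\ref{thm:DualDefectTrivialP}), completing the proof.
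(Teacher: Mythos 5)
Your proof is correct and follows exactly the paper's route: the paper proves this proposition by citing Corollary~\ref{cor:ContainmentNontrivial} (with Theorem~\ref{thm:DualDefectTrivialP} supplying the translation between dual defectiveness and triviality of the cuspidal form, just as you do). Your additional verification that the intermediate configurations retain full rank is a worthwhile detail the paper leaves implicit.
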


\begin{proof}
This (well-known result) follows from Corollary \ref{cor:ContainmentNontrivial}.
\end{proof}

Let us now recover Esterov's results on dual defective point configuration
from \cite[{Lem.~3.17}]{Est10} and \cite[{Cor.~3.20}]{Est13},
and extend it from the algebraic case to the case of exponential sums.

\begin{theorem}
\label{thm:DualDefectiveIteratedCircuit}
A specturm $A \subset \RR^{1+n}$ is dual defective if and only if $A$ does not contain any iterated circuit.
\end{theorem}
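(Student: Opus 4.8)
The plan is to prove both directions using the dictionary between dual defectivity and the cuspidal form established in Theorem~\ref{thm:DualDefectTrivialP}, namely that $A$ is dual defective if and only if $P_A(t)$ is trivial. The easy direction is the ``if $A$ contains an iterated circuit then $A$ is not dual defective'' implication. Here I would argue as follows: suppose $A' \subseteq A$ is an iterated circuit. By Remark~\ref{rem:IteratedCircuitNonTrivial}, the cuspidal form $P_{A'}(t)$ is nontrivial (its $t_1^{n_1}\cdots t_m^{n_m}$-coefficient is a product of the nontrivial monomials $P_{A_j}(t_j)$). By Corollary~\ref{cor:ContainmentNontrivial}, nontriviality of $P_{A'}$ forces nontriviality of $P_A$, and hence by Theorem~\ref{thm:DualDefectTrivialP} the configuration $A$ is not dual defective. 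This is essentially a three-line argument assembling results already proved.

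The substantive direction is the contrapositive of the converse: if $A$ does \emph{not} contain any iterated circuit, then $A$ is dual defective, i.e.\ $P_A(t)$ is trivial. The natural strategy is contrapositive-plus-induction: assuming $P_A(t)$ is nontrivial (equivalently $A$ is not dual defective), I would produce an iterated circuit inside $A$. The idea is to induct on the dimension $n$ (or on $N$), peeling off one circuit at a time to build up the upper-diagonal structure of an iterated circuit. Concretely, since $P_A(t)$ is a nontrivial homogeneous form of degree $n$, one selects a monomial occurring with nonzero coefficient; reading off its exponent data should single out a distinguished direction and a sub-configuration behaving like the first diagonal block $\tilde A_1$ of Definition~\ref{def:Diagonal}. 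Projecting $A$ along that direction (quotienting out the affine span of that block) yields a lower-dimensional configuration whose cuspidal form is forced to remain nontrivial, so the inductive hypothesis supplies an iterated circuit in the quotient, which one then lifts back to an iterated circuit in $A$.

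The main obstacle, and where the promised pigeonhole principle enters, is the combinatorial bookkeeping that identifies the circuit to peel off and guarantees the projected configuration still carries a nontrivial cuspidal form. The definition \eqref{eqn:CharacteristicPolynomial} of $P_A(t)$ sums over all $n$-subsets $\sigma$, weighting by $|\hat A_\sigma|^2 \prod_{k\in\sigma}\<\beta_k,t\>$; to extract an iterated-circuit structure I must show that some $\sigma$-support organizes into nested circuits. I expect the pigeonhole argument to show that among the indices contributing to a fixed nonzero monomial of $P_A(t)$, there must be a minimal subset of $n_1+1$ columns forming a circuit (codimension one, not a pyramid), precisely because the Gale-dual rows $\beta_k$ appearing in that monomial cannot all be too ``independent'' without forcing the configuration to have higher codimension than the monomial records. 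Making this selection canonical, and then verifying that after projecting out this first circuit the relevant minors and the inductive cuspidal form are genuinely nonzero (so the induction does not stall), is the delicate point; Theorem~\ref{thm:RetsrictionOfPToHyperplane} and Theorem~\ref{thm:CharacteristicProduct} should be the tools that transfer nontriviality across the projection step and certify the upper-diagonal block form of Definition~\ref{def:Diagonal}.
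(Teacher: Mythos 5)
Your first direction is correct and coincides with the paper's own argument: Remark~\ref{rem:IteratedCircuitNonTrivial} gives nontriviality of the cuspidal form of the iterated circuit, Corollary~\ref{cor:ContainmentNontrivial} (packaged in the paper as Proposition~\ref{pro:SubsetNontrivial}) passes nontriviality up to $P_A$, and Theorem~\ref{thm:DualDefectTrivialP} translates back to non-defectivity.

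The hard direction, however, has a genuine gap, and it sits exactly where you flag ``the delicate point.'' Your plan is to read an iterated-circuit structure off the exponent vector of a single nonzero monomial of $P_A(t)$, and to show by pigeonhole that ``the indices contributing to that monomial'' contain a circuit. This cannot work as stated: $P_A(t)$ is defined only up to a linear change of the coordinates $t$ (choice of Gale dual), so an individual monomial has no invariant meaning; moreover each factor $\langle \beta_k, t\rangle$ is a general linear form in all $m$ variables, so a monomial of $P_A(t)$ records neither a subset of columns of $A$ nor a partition into blocks, and distinct sets $\sigma$ can contribute to the same monomial with cancellations. The paper's proof supplies precisely the mechanism you are missing, and its pigeonhole is aimed at a different object. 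It is a double induction on the codimension $m$ and the dimension $n$ (base case $m=1$, Example~\ref{ex:PinCodimensionOne}), organized around a deletion dichotomy: if $P_{A\setminus\{\alpha\}}$ is nontrivial for some $\alpha\in A$, one inducts on codimension and is done. Otherwise $P_{A\setminus\{\alpha\}}$ is trivial for \emph{every} $\alpha$, and then Corollary~\ref{cor:LinearFormDividesP} forces every one of the $N$ rows of $B$ to yield a linear factor $\langle\beta_\alpha,t\rangle$ of $P_A(t)$; since $N = 1+n+m > n = \deg P_A$, the pigeonhole principle (rows of the Gale dual versus the degree of $P_A$) forces $B$ to have parallel rows. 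A second degree count produces a parallel family $\beta_j = \gamma_j\beta$, $j=1,\dots,k$, such that $\langle\beta,t\rangle^{k}$ does \emph{not} divide $P_A(t)$; by the computation in Proposition~\ref{pro:ParallelRows} this forces $\gamma_1+\dots+\gamma_{k-1}\neq 1$, which is exactly the condition making the block $(0, I_{k-1},\gamma)$ in \eqref{eqn:ParallelRowsOfBMatrices} a circuit and allowing the off-diagonal $*$-entries to be eliminated by an affine transformation. Only at that point does Theorem~\ref{thm:CharacteristicProduct} apply, factoring $P_{A'}(t')$ out of $P_A(t)$, so that $P_{A'}$ is nontrivial and the induction closes by appending the circuit block to the iterated circuit found in $A'$. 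So the two tools you cite are indeed the right ones, but the steps that make them applicable --- the deletion dichotomy and the two counting arguments on linear factors --- are absent from your proposal, and without them your induction stalls exactly where you predicted it would.
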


\begin{proof}
By Theorem~\ref{thm:DualDefectTrivialP} and Proposition~\ref{pro:SubsetNontrivial}
we translate to the equivalent statement that $P_A(t)$ is trivial
if and only if $A$ does not contain an iterated circuit. 
The \emph{only if}-direction was proven in Remark~\ref{rem:IteratedCircuitNonTrivial}.

Let us prove the \emph{if}-direction. Assume that $P_A(t)$ is nontrivial.
We use a double induction over the codimension $m$ and the dimension $n$,
where the base cases $m=1$ for arbitrary dimension is covered by Example~\ref{ex:PinCodimensionOne}.

If $P_{A\setminus \{\alpha\}}(t)$ is nontrivial for some $\alpha \in A$
then, by induction on codimension, $A \setminus \{\alpha\}$ contains an iterated circuit,
implying that $A$ contains an iterated circuit as well.
Hence, it suffices to consider the case when $P_{A\setminus \{\alpha\}}$ is trivial
for all $\alpha \in A$. 

By Corollary~\ref{cor:LinearFormDividesP}, 
we have that $\< \beta, t \>$ divides $P_A(t)$ for all rows $\beta$ of $B$.
However, $B$ has $N$ rows, while $P_A(t)$ is a homogeneous, nontrivial, polynomial of degree $n$. 
By the pigeon hole principle, the Gale dual $B$ must have parallel rows.
For a family $\beta_1, \dots, \beta_k$ of parallel rows of $B$,
where $k > 1$, let us borrow the notation from the proof of Proposition~\ref{pro:ParallelRows}:
that $\beta_j = \gamma_j \beta$ for some scalars 
$\gamma_j \neq 0$ for $j= 1, \dots, k$, and for some $\beta \neq 0$.
Then, again by a comparison of the number of rows of $B$ and the degree of $P_A(t)$,
there must exist a family, $\beta_1, \dots, \beta_k$ of parallel rows of $B$,
with $k > 1$, such that $\< \beta, t\>^k$ does not divide $P_A(t)$.
Let us fix such a family.

Let us write $A$ and $B$ as in \eqref{eqn:ParallelRowsOfBMatrices},
 and let $\gamma = (\gamma_1, \dots, \gamma_k)^T$ 
be as in that proof. Since $\<\beta, t\>^k$ does not divide $P_A(t)$,
we have that $|\gamma_k| \neq 1$.
Therefor, the $k\times k$-submatrix  of $A$ 
\[
\left(\begin{array}{cc} 1 & 1 \\ I_{k-1} & \gamma \end{array} \right)
\]
has rank $k$. (Recall that each $\gamma_k \neq 0$, for otherwise $A$ is a pyramid.) 
This has two consequences. First, we have that $(0, I_{k-1}, \gamma)$ is 
a $k$-dimensional circuit. 
Second, by applying an integer affine transformation,
we can eliminate all entries marked by $*$ in \eqref{eqn:ParallelRowsOfBMatrices}.
Thus, by Theorem~\ref{thm:CharacteristicProduct}, and with the notation of \eqref{eqn:ParallelRowsOfBMatrices}, it holds that $P_{A'}(t)$ is a factor
of $P_A(t)$.
In particular, $P_{A'}(t)$ is nontrivial, and hence 
$A'$ contains an iterated circuit of dimension $k-1$, by induction
on codimension and dimension. It follows that $A$ contains an iterated circuit.
\end{proof}

\section{Rationality of the Cuspidal Locus}

Let $A$ be algebraic.
Consider the case $n=1$, which was studied in detail in \cite{MT15},
and where the cuspidal form $P_A(t)$ appeared in this special case.
As $n=1$ the cuspidal form is, for every $m$, a non-trivial
linear form in $t$ vanishing along some hyperplane in $\PP^{m-1}$.
In particular, the cuspidal locus of $\cX_A$ is always unirational, and
if it is a subvariety of $\cX_A$ of codimension one, then it is rational.
Actually, a stronger statement holds: there is a point configuration $E$
such that the cuspidal locus of $\cX_A$ is isomorphic to the discriminantal variety $\cX_{E}$.
Let us here explain the corresponding result for general $n$.

\begin{theorem}
Let $A$ be algebraic.
For each linear factor $\<\ell, t\>$ of $P_A(t)$, such that $\ell$ is not proportional to any row
of the Gale dual $B$, there is an algebraic point configuration $E$ whose discriminant $\cX_E$
is birational to a subvariety of the cuspidal locus of\/ $\cX_A$.
\end{theorem}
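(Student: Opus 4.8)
The plan is to realise the component of the cuspidal locus cut out by $\<\ell,t\>$ as the image of an explicit \emph{differentiation map}, and to recognise that image as the discriminantal variety $\cX_E$ of a configuration $E$ obtained from $A$ by deleting a single point. First I would set up the parametrisation. Write $H=\{t:\<\ell,t\>=0\}\subset\CC^m$ and let $Y_\ell=\overline{\Phi\big((\CC^\times)^n\times H\big)}$ be the corresponding piece of the cuspidal locus. Since $\<\ell,t\>$ divides $P_A(t)$, Theorem~\ref{thm:CuspidalLocus} guarantees that every $f=\Phi(\omega;t)$ with $t\in H$ is a cusp, so by Theorem~\ref{thm:Hessian} the Hessian $H_f(w_0)$ at the singular point $w_0=-\omega$ is singular. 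On the dense open subset of $H$ where this Hessian has corank one I may choose a null vector $v=v(t)$, depending rationally on $t$; as a translation of $A$ rescales $H_f(w_0)$ by a nonzero scalar, the direction $v$ is moreover independent of that translation.

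Next comes the construction of $E$ and of the map. Consider the directional derivative $g=\partial_v f=\sum_k c_k\<\hat\alpha_k,v\>e^{\<w,\hat\alpha_k\>}$. A direct computation gives $g(w_0)=\<\nabla f(w_0),v\>=0$ and $\nabla g(w_0)=H_f(w_0)\,v=0$, so $g$ has a singular point at $w_0$ and therefore lies in the discriminantal variety of its own support. Using the $\GL_{1+n}(\RR)$-action I translate a chosen point $\alpha_0\in A$ to the origin; then the $\alpha_0$-term of $g$ carries the factor $\<\hat\alpha_0,v\>=0$ and drops out, while---provided the directions $v(t)$ are not confined to a hyperplane as $t$ ranges over $H$ (the point addressed below)---no other term drops. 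Thus $\operatorname{supp}(g)=E:=A\setminus\{\alpha_0\}$, a configuration of $N-1$ points of the same dimension $n$ and codimension $m-1$; being a subset of $A$ it is automatically algebraic. The assignment $f\mapsto\partial_{v}f$, read on coefficients as $c\mapsto\big(\<\hat\alpha_k,v\>\,c_k\big)_{k\neq\alpha_0}$, defines a rational map $\Psi\colon Y_\ell\dashrightarrow\cX_E$.

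I would then prove $\Psi$ birational. Both $Y_\ell$ and $\cX_E$ have dimension $N-2$---the former from the parametrisation $\Phi|_H$, the latter because $E$, being non-dual-defective, has discriminant a hypersurface in $(\CC^\times)^{N-1}$---so it suffices to exhibit a rational inverse. Given a generic $g\in\cX_E$ with singular point $w_0$, the coefficients $c_k$ for $k\neq\alpha_0$ are recovered up to the harmless scaling by $\<\hat\alpha_k,v\>$, and the remaining coefficient $c_{\alpha_0}$ is then pinned down by the single equation $f(w_0)=0$; that the resulting $f$ is a genuine cusp of $\cX_A$ is where I invoke the (generic) injectivity of the Horn--Kapranov parametrisation of Theorem~\ref{thm:HornKapranov}. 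This identifies $\cX_E$, birationally, with the subvariety $Y_\ell$ of the cuspidal locus of $\cX_A$.

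The hard part will be the genericity analysis underlying Step two. I must show that the rational section $t\mapsto v(t)$ has image spanning $\CC^n$, so that $\<\hat\alpha_k,v(t)\>$ vanishes identically on $H$ for no $k\neq\alpha_0$; otherwise $\partial_v f$ would lose a second monomial, $\operatorname{supp}(g)$ would have fewer than $N-1$ points, $\dim\cX_E$ would drop below $N-2$, and $\Psi$ could not be birational. It is precisely here that the two hypotheses enter: $\<\ell,t\>\mid P_A(t)$ places $Y_\ell$ inside the cusp locus and so produces $v(t)$ at all, while $\ell\not\parallel\beta_k$ keeps $Y_\ell$ out of every coordinate hyperplane $\{c_k=0\}$---this is exactly what separates the present \emph{second type} of linear factor from those of \S\ref{sec:LinearFactors} and forces the differentiation construction rather than a plain restriction to a subconfiguration. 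A secondary point is to choose the deleted $\alpha_0$ so that $E=A\setminus\{\alpha_0\}$ is not dual defective, which is possible because $A$ contains an iterated circuit (Theorem~\ref{thm:DualDefectiveIteratedCircuit}) one of whose points can be spared.
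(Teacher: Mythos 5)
Your forward construction is sound as far as it goes: for a cusp $f$ with degenerate singular point $w_0$ and Hessian null direction $v$, the derivative $g=\partial_v f$ does satisfy $g(w_0)=\<\nabla f(w_0),v\>=0$ and $\nabla g(w_0)=H_f(w_0)v=0$, and after translating $\alpha_0$ to the origin its support is (generically) $A\setminus\{\alpha_0\}$. But the proof collapses at the birationality step, and the gap is not a technicality. Your ``rational inverse'' is never actually constructed: to recover $c_k=b_k/\<\hat\alpha_k,v\>$ from $g$ you must already know $v$, yet $v$ is determined by $f$ (as the kernel of $H_f(w_0)$), not by $g$. It has to be solved for from the system $\nabla f(w_0)=0$, i.e.\ $\sum_{k\neq\alpha_0} b_k e^{\<w_0,\hat\alpha_k\>}\,\hat\alpha_k/\<\hat\alpha_k,v\>=0$ in $v\in\PP^{n-1}$, and nothing in your argument shows this system has a \emph{unique} solution; generic finiteness would give dominance of the map, but not injectivity. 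The appeal to ``generic injectivity of the Horn--Kapranov parametrisation'' cannot close this hole: the paper states explicitly that $\Phi$ is far from injective, and the genuine birationality statement in this circle of ideas is Kapranov's theorem about the reduced map $t\mapsto (Bt)^B$, which your argument never touches.

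Here is why the gap is fatal rather than fillable by more care: your construction nowhere uses the \emph{linearity} of the factor $\<\ell,t\>$ --- it only uses that $\ell^*$ lies inside $\{P_A=0\}$ and avoids the hyperplanes $\beta_k^*$. If the argument were correct, it would apply verbatim to any irreducible factor of $P_A(t)$ not proportional to a row form, and would show that the corresponding component of the cuspidal locus is birational to $\cX_{A\setminus\{\alpha_0\}}$, which is always a \emph{rational} variety (by Kapranov, birational to $\PP^{m-2}$ times a torus). The paper's own example with $n=3$, $N=7$ and $P_A$ a smooth cubic form refutes this: there the cuspidal locus is birational to (a torus times) a smooth cubic curve, which is unirational but not rational. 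So for general components the differentiation map is at best generically finite of degree $>1$, and any correct proof must exploit the linear structure of $\ell^*$ in an essential way. That is exactly what the paper does, by a quite different route: it writes $\ell^*$ as the image of an integer linear map $L\colon\PP^{m-2}\to\PP^{m-1}$, notes that $ABL=0$ so that $BL$ is the Gale dual of a configuration $E\subset\RR^{2+n}$ (same $N$ points, one dimension higher, codimension $m-1$, rather than your $N-1$ points in the same dimension), and identifies $(\Psi_A\circ L)^L=\Psi_E$, at which point Kapranov's birationality theorem for reduced discriminants finishes the proof. A secondary, also unresolved, issue in your write-up is the ``hard part'' you flag yourself: that $\<\hat\alpha_k,v(t)\>$ does not vanish identically on $\ell^*$ for $k\neq\alpha_0$ is asserted to follow from $\ell\not\parallel\beta_k$, but that hypothesis only controls the coefficients $c_k=\<\beta_k,t\>$, not the direction $v(t)$, so even the claim $\operatorname{supp}(g)=A\setminus\{\alpha_0\}$ is left unproven.
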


\begin{proof}
In the algebraic case it suffices to consider the reduced discriminant
of Kapranov \cite{Kap91} mentioned in Remark~\ref{rem:ReducedDiscriminant}. 
Its Horn--Kapranov uniformazation $\Psi_A(t)$ is given by $t \mapsto \Phi(1; t)^B = (Bt)^B$.

Let $s$ denote coordinates
in $\ell^*$.
Let $\<\ell, t\>$ be a linear factor of $P_A(t)$,
where $\ell$ is not proportional to any row of the Gale dual $B$.
The hyperplane $\ell^*$ can be written in implicit form,
as the image a linear transformation $L\colon \PP^{m-2} \rightarrow \PP^{m-1}$.
Notice that $L$ can be chosen as an integer matrix.
Since $ABL = 0$, we can extend $A$ (by adding one extra row) to an integer matrix $E$ such that we 
have an exact sequence
\begin{center}
\begin{tikzpicture}
  \node (O0) {0};
  \node (O1) [right of=O0] {$\CC^{m-1}$};
  \node (O3) [right of=O1] {$\CC^{A}$};
  \node (O5) [right of=O3] {$\CC^{2+n}$};
  \node (O6) [right of=O5] {0.};
  \draw[->] (O0) to node {} (O1);
  \draw[->] (O1) to node {$BL$} (O3);
  \draw[->] (O3) to node {$E$} (O5);
  \draw[->] (O5) to node {} (O6);
\end{tikzpicture}
\end{center}
It follows that
$\left(\Psi_A \circ L\right)^L = \Psi_{E}$,
and hence $\left(\Psi_A \circ L\right)^L$ is a birational morphism from $\ell^*$ to $\cX_E$.
\end{proof}

The singular locus of $\cX_A$ is in general not birational to a discriminant
variety. That is, it is in general not rational. Let us give a simple example.

\begin{example}
Let $n=3$ and $N = 7$, so that $P_A(t)$ is a cubic form in three variables.
In particular, if $P_A(t)$ is non-singular then the cuspidal locus of $\cX_A$
is birational to a smooth cubic curve. Such a curve is
unirational but not rational. For an explicit example then $P_A(t)$
is non-singular we present the point configuration
\[
A = \left( \begin{array}{ccccccc}
1 & 1 & 1 & 1 & 1 & 1 & 1\\
0 & 1 & 1 & 2 & 3 & 3 & 3\\
3 & 0 & 2 & 2 & 1 & 2 & 3\\
0 & 3 & 2 & 2 & 2 & 3 & 0\end{array}\right).
\]
\end{example}

\section{The Bivariate Case: The Signature of the Quadratic Form}
\label{sec:Bivariate}

In the case that $n=2$ the polynomial
$P_A(t)$ is a quadratic form. Being defined only up to choice of coordinates,
let us in this section consider the (real) invariants given by the rank and the signature
of $P_A(t)$.

\begin{theorem}
\label{thm:QuadraticFormRank}
Let $n=2$ and let $s_A$ denote the
signature of\/ $P_A(t)$. Then, 
\begin{enumerate}[i)]
\item $s_A = (0,0; m)$ if and only if $A$ is a pyramid. \label{it:i}
\item $s_A = (1,0; 0)$ if and only if $A$ is contained in a non-real parabola. \label{it:ii}
\item $s_A = (0,1; m-1)$ if and only if $A$ is contained in a real parabola. \label{it:iii}
\item $s_A = (1,1; m-2)$ if and only if $A$ is contained in a hyperbola. \label{it:iv}
\item $s_A = (0,2; m-2)$ if and only if $A$ is contained in an ellipse. \label{it:v}
\item $s_A = (1,2; m-3)$ in all other cases. \label{it:vi}
\end{enumerate}
\end{theorem}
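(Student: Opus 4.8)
Since $n=2$, each $\hat\alpha_k = (a_k,b_k)^T$ and, writing $L_k(t) = \<\beta_k,t\>$, a minor in \eqref{eqn:CharacteristicPolynomial} is $|\hat A_{\{i,j\}}| = a_ib_j - a_jb_i$. The first step is to symmetrize the sum and expand the square $(a_ib_j - a_jb_i)^2 = a_i^2b_j^2 - 2a_ib_ia_jb_j + a_j^2b_i^2$, which collapses the double sum into a determinant:
\[
P_A(t) \;=\; \tfrac12\sum_{i,j}(a_ib_j-a_jb_i)^2 L_iL_j \;=\; U(t)\,V(t) - W(t)^2, \qquad U=\sum_k a_k^2 L_k,\; V=\sum_k b_k^2 L_k,\; W=\sum_k a_kb_k L_k.
\]
Thus $P_A = G\circ\Lambda$, where $\Lambda=(U,V,W)\colon \CC^m\to\CC^3$ is linear and $G(u,v,w)=uv-w^2$ has real signature $(1,2)$ (after $u=x+y,\ v=x-y$ it becomes $x^2-y^2-w^2$). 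This already proves the rank bound $\operatorname{rank}P_A\le 3$, and it reduces the signature to a finite-dimensional computation: $s_A$ is the signature of $G$ restricted to $\operatorname{im}\Lambda$, with the zero-count augmented by $\dim\ker\Lambda = m-\operatorname{rank}\Lambda$.

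\textbf{Dictionary between $\Lambda$ and conics through $A$.} A relation $\sum_k\lambda_k L_k\equiv 0$ holds exactly when $\lambda\in\ker B^T = \operatorname{rowspace}(A)$, i.e.\ $\lambda_k = c_0 + c_1 a_k + c_2 b_k$. Hence a dependency $c_1U+c_2V+c_3W\equiv 0$ is equivalent to $c_1a_k^2 + c_2 b_k^2 + c_3 a_kb_k = c_0' + c_1'a_k + c_2'b_k$ for all $k$, that is, to a conic $c_1x^2 + c_3xy + c_2y^2 - c_1'x - c_2'y - c_0' = 0$ through every $(a_k,b_k)$ with quadratic part $(c_1,c_3,c_2)$. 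Because $A$ is full-dimensional (so it lies on no line), the linear--constant part is determined by the quadratic part, and this correspondence is a linear isomorphism between $\ker(\Lambda^*)=(\operatorname{im}\Lambda)^\perp$ and the space of conics through $A$. Consequently $\operatorname{rank}\Lambda = 3$ iff $A$ lies on no conic, giving case \ref{it:vi} with $s_A=(1,2;m-3)$; and $\operatorname{rank}\Lambda\le 2$ iff $A$ lies on a conic.

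\textbf{The nondegenerate conic cases.} When $\operatorname{rank}\Lambda=2$ there is a unique conic through $A$, and $\operatorname{im}\Lambda = n^\perp$ with $n=(c_1,c_2,c_3)$ its quadratic part. I would apply the standard hyperplane-restriction formula for a nondegenerate form $G$ with matrix $M$: setting $\nu = n^TM^{-1}n$, the restriction $G|_{n^\perp}$ has signature $(p-1,q)$ if $\nu>0$, $(p,q-1)$ if $\nu<0$, and is degenerate of signature $(p-1,q-1;1)$ if $\nu=0$. A one-line computation gives $M^{-1}=\bigl(\begin{smallmatrix}0&2&0\\2&0&0\\0&0&-1\end{smallmatrix}\bigr)$, so $\nu = 4c_1c_2 - c_3^2 = -\operatorname{disc}$ of the quadratic part. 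Feeding in $(p,q)=(1,2)$: an ellipse ($\operatorname{disc}<0$, $\nu>0$) yields $(0,2;m-2)$ (case \ref{it:v}); a hyperbola ($\operatorname{disc}>0$, $\nu<0$) yields $(1,1;m-2)$ (case \ref{it:iv}); a parabola ($\operatorname{disc}=0$, $\nu=0$) yields the degenerate $(0,1;m-1)$ (case \ref{it:iii}).

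\textbf{Degenerate cases and the main obstacle.} If $\operatorname{rank}\Lambda=0$ then $U\equiv V\equiv W\equiv 0$ and $P_A\equiv 0$; combined with Theorem~\ref{thm:DualDefectTrivialP} and Theorem~\ref{thm:DualDefectiveIteratedCircuit} (a non-pyramidal plane configuration always contains a circuit) this is precisely case \ref{it:i}. The delicate case is $\operatorname{rank}\Lambda=1$, where the space of conics through $A$ is a pencil. A base-locus count shows a pencil of conics passing through five or more points must have a fixed line component, forcing $A$ to be a pyramid; hence $\operatorname{rank}\Lambda=1$ with $A$ non-pyramidal occurs only for $N=4$, i.e.\ $m=1$ (a circuit). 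Then $\operatorname{im}\Lambda=\RR\lambda$ is a line and $s_A$ is the sign of $G(\lambda)=U_1V_1-W_1^2$. I expect the hard part to be pinning down the real/non-real parabola dichotomy here, since the parabola in the pencil is not unique: I would transport the analysis to the pencil space $D=\lambda^\perp$ carrying the discriminant form $\delta = c_3^2 - 4c_1c_2$ of signature $(2,1)$, for which the same hyperplane formula gives $\lambda^TN^{-1}\lambda = -G(\lambda)$. Thus $G(\lambda)<0$ makes $\delta|_D$ indefinite (real parabolas present), matching $(0,1;0)$ and case \ref{it:iii} at $m=1$, while $G(\lambda)>0$ makes $\delta|_D$ definite (no real parabola), matching $(1,0;0)$ and case \ref{it:ii}; this also explains why the signature $(1,0;\cdot)$ never occurs for $m\ge 2$, since the rank-one parabola obtained from $\operatorname{rank}\Lambda=2$ is always $(0,1;m-1)$.
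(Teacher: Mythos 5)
Your proof is correct, and it takes a genuinely different route from the paper's. The paper proceeds computationally: it encodes $P_A(t)$ by the Gram-type matrix $Q=\left(g(\alpha_k,\alpha_j)\right)$ of Lemma~\ref{lem:BivariateQuadratixFormMatrix}, proves the rank bound and the conic/parabola criteria through explicit, computer-verified determinant identities (Lemma~\ref{lem:QuadraticG4}, the factorization \eqref{eqn:G3}, the $96$-term polynomial $G_2$ in Lemmas~\ref{lem:QuadraticParabola} and~\ref{lem:QuadraticConic}), and then pins down the signatures by a continuity/deformation argument plus the representative Examples~\ref{ex:Hyperbola} and~\ref{ex:Parabola}. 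You instead exhibit the identity $P_A=UV-W^2$, i.e.\ $P_A=G\circ\Lambda$ with $G(u,v,w)=uv-w^2$ a fixed form of signature $(1,2)$ and $\Lambda$ linear, after which everything is linear algebra: the dictionary $(\operatorname{im}\Lambda)^{\perp}\simeq\{\text{conics through }A\}$ plays the role of Lemmas~\ref{lem:QuadraticParabola}--\ref{lem:QuadraticConic} but comes for free, and the standard hyperplane-restriction formula, via $\nu=n^TM^{-1}n=-\operatorname{disc}$, converts the type of the unique conic directly into the signature. This buys a lot: the bound $\operatorname{rank}P_A\le 3$ becomes transparent rather than a determinant identity; the signature computation is uniform, requiring neither the deformation argument (whose ingredients --- connectivity of each class and constancy of the signature along deformations --- the paper leaves implicit) nor representative examples; and the codimension-one case, which the paper treats separately via Example~\ref{ex:PinCodimensionOne} and formula \eqref{eqn:FourPointsParabola}, falls out of the same restriction formula applied to the discriminant form on the pencil, which also explains conceptually why the signature $(1,0;\cdot)$ can only occur when $m=1$. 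What the paper's route buys instead are the explicit formulas ($Q$, $H$, $G_2$) of independent interest. Two points you should tighten: in case \ref{it:i} you conflate $\operatorname{rank}\Lambda=0$ with $P_A\equiv 0$, but a rank-one $\Lambda$ with isotropic image also kills $P_A$; this is harmless because your $m=1$ analysis (or Example~\ref{ex:PinCodimensionOne}) shows $G(\lambda)\neq 0$ for a circuit, and your base-locus argument excludes $\operatorname{rank}\Lambda\le 1$ for non-pyramids with $m\ge 2$, but the exclusion should be said explicitly. Second, your parenthetical claim that a non-pyramidal plane configuration always contains a circuit is stated without proof; this is on par with the paper, which invokes the equivalent dual-defectivity statement as ``well known,'' but it is the one external fact your otherwise self-contained argument still leans on.
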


\begin{remark}
In the case of codimension $m=1$ it follows from Theorem~\ref{thm:QuadraticFormRank}
(and also from Example~\ref{ex:PinCodimensionOne}) that the combinatorial type of $A$
is determined by the signature of $P_A(t)$.
Indeed, $A$ is a \emph{simplex circuit} if and only if\/ $s_A = (1,0; 0)$,
it is a \emph{vertex circuit} if and only if\/ $s_A = (0,1; 0)$,
and it is a pyramid if and only if $s_A= (0, 0; 1)$.
\end{remark}

To prove Theorem~\ref{thm:QuadraticFormRank}, we write $A$ in the form
\begin{equation}
\label{eqn:QuadraticA}
A = \left( \begin{array} {cccccc}
1 & 1 & 1 & 1 & \dots & 1 \\
0 & 1 & 0 & \alpha_{11} & \cdots & \alpha_{m1} \\
0 & 0 & 1 & \alpha_{12} & \cdots & \alpha_{m2}
\end{array}\right).
\end{equation}
We write $\alpha_k = (\alpha_{k1}, \alpha_{k2})^T$, for simpler notation.
We set $|\alpha_k| = \alpha_{k1} + \alpha_{k2}$ for all $k$. We choose 
the dual matrix
\begin{equation}
\label{eqn:QuadraticB}
B^T = \left(\begin{array}{cccccccc}
|\alpha_1|-1 & -\alpha_{11} & -\alpha_{12} & 1 & 0 & \cdots & 0\\
|\alpha_2|-1 & -\alpha_{21} & -\alpha_{22} & 0 & 1 & \cdots & 0\\
\vdots & \vdots & \vdots & \vdots & \vdots & \ddots & \vdots \\
|\alpha_m|-1 & -\alpha_{m1} & -\alpha_{m2} & 0 & 0 & \cdots & 1
\end{array}\right).
\end{equation}

\begin{lemma}
\label{lem:BivariateQuadratixFormMatrix}
Let $n=2$, and let $A$ and $B$ be as in \eqref{eqn:QuadraticA} and \eqref{eqn:QuadraticB}.
Then, the quadratic form $P_A(t)$ is given by the matrix
$Q = \left( g(\alpha_k, \alpha_j) \right)_{k,j}$ where $1 \leq k,j \leq m$ and
\[
g(\alpha_k, \alpha_j) = \frac{1}{2} \bigg( \alpha_{k1} \alpha_{j2} (1-\alpha_{k1}-\alpha_{j2}) + \alpha_{k2} \alpha_{j1} (1-\alpha_{k2}-\alpha_{j1}) + (\alpha_{k1} \alpha_{j2} - \alpha_{k2} \alpha_{j1})^2\bigg).
\]
\end{lemma}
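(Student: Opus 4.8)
The plan is to compute the quadratic form $P_A(t)$ directly from Definition~\ref{def:CharacteristicPolynomial} using the specific $A$ and $B$ in \eqref{eqn:QuadraticA} and \eqref{eqn:QuadraticB}, and then read off the symmetric matrix $Q$. Since $n=2$, each $\sigma \in \Sigma$ is a pair of column indices, so $P_A(t) = \sum_{\sigma} |\hat{A}_\sigma|^2 \<\beta_k, t\>\<\beta_l, t\>$ where $\sigma = \{k,l\}$. To extract the entry $Q_{kj}$, I would first rewrite $P_A(t)$ in the form $t^T Q t = \sum_{k,j} Q_{kj} t_k t_j$ and match coefficients. The cleanest route is to observe that $\<\beta_k, t\> = t_k$ for the rows of $B$ corresponding to the last $m$ columns of $A$ (rows $4, \dots, N$), because of the identity block $I_m$ in \eqref{eqn:QuadraticB}, while the first three rows $\beta_1, \beta_2, \beta_3$ are linear combinations of the $t_k$ dictated by the first three columns of $B^T$.

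The key computational step is to group the terms of \eqref{eqn:CharacteristicPolynomial} according to how the product $\prod_{k\in\sigma}\<\beta_k,t\>$ contributes to a fixed monomial $t_k t_j$. First I would catalog the minors $|\hat{A}_\sigma|$: writing $\hat{A}$ with columns indexed $0,1,2$ (the standard simplex vertices $(0,0)$, $(1,0)$, $(0,1)$) and $\alpha_1, \dots, \alpha_m$, the $2\times 2$ minors are either the simplex minors, the mixed minors $|\hat\alpha_i\ e_j|$ of a standard basis column against an $\alpha_i$, or the minors $\alpha_{i1}\alpha_{j2} - \alpha_{i2}\alpha_{j1}$ of two general columns. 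I expect the three summands inside $g$ to correspond precisely to these three families: the terms $\alpha_{k1}\alpha_{j2}(1-\alpha_{k1}-\alpha_{j2})$ and its symmetric partner arise from pairing an $\alpha$-column with a simplex vertex (the factors $1-\alpha_{k1}-\alpha_{j2}$ encoding the linear rows $\beta_1,\beta_2,\beta_3$), while the squared determinant $(\alpha_{k1}\alpha_{j2}-\alpha_{k2}\alpha_{j1})^2$ comes from pairing two $\alpha$-columns, consistent with the $|\hat{A}_\sigma|^2$ weight in the definition.

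The main obstacle will be the careful bookkeeping of signs and of the cross-terms: because $Q$ is symmetric and $P_A$ is being written as $t^TQt$, each off-diagonal monomial $t_k t_j$ with $k \ne j$ collects contributions from several $\sigma$ (namely those involving columns $k$, $j$, and the simplex vertices), and one must verify that the symmetrization $\tfrac{1}{2}$ in $g(\alpha_k,\alpha_j)$ correctly accounts for the fact that $Q_{kj}=Q_{jk}$ together with the expansion of $\<\beta_1,t\>$, $\<\beta_2,t\>$, $\<\beta_3,t\>$ in terms of all the $t_k$. I would handle this by treating the diagonal entries $Q_{kk}$ (setting $k=j$, where $g$ should reduce to the coefficient of $t_k^2$) and the off-diagonal entries separately, checking in each case that the factored expression matches the raw sum $\sum_\sigma |\hat{A}_\sigma|^2 \<\beta_k,t\>\<\beta_l,t\>$. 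Once the matching of all three families of terms is confirmed, the identity $Q = (g(\alpha_k,\alpha_j))_{k,j}$ follows, completing the proof.
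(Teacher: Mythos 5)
Your proposal is correct, but it takes a different route from the paper. The paper's proof is a two-line reduction: since every entry $g(\alpha_k,\alpha_j)$ of $Q$ involves only the two points $\alpha_k$ and $\alpha_j$, and since Theorem~\ref{thm:RetsrictionOfPToHyperplane} identifies the restriction of $P_A(t)$ to the coordinate subspace $\{t_i = 0,\ i \notin \{k,j\}\}$ with the cuspidal form of the sub-configuration obtained by deleting the corresponding points, it suffices to verify the formula for $m=2$; a symmetric matrix is determined by its $2\times 2$ principal submatrices, so a single small computation finishes the job. You instead expand Definition~\ref{def:CharacteristicPolynomial} for arbitrary $m$ and match coefficients of $t_k t_j$; this is self-contained (it never invokes Theorem~\ref{thm:RetsrictionOfPToHyperplane}) at the price of heavier bookkeeping, which you correctly identify as the main burden. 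Your expansion does go through: the pair of simplex columns $(1,0)^T,(0,1)^T$ (minor $1$) contributes $\big(\sum_i \alpha_{i1}t_i\big)\big(\sum_j \alpha_{j2}t_j\big)$, the mixed pairs contribute $-\sum_i \alpha_{i2}^2 t_i \sum_j \alpha_{j1}t_j - \sum_i \alpha_{i1}^2 t_i \sum_j\alpha_{j2}t_j$, and the pairs $\{\alpha_i,\alpha_j\}$ contribute $\sum_{i<j}(\alpha_{i1}\alpha_{j2}-\alpha_{i2}\alpha_{j1})^2 t_it_j$; collecting the coefficient of $t_kt_j$ gives exactly $2g(\alpha_k,\alpha_j)$ off the diagonal and $g(\alpha_k,\alpha_k)$ on it, as required. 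One detail in your heuristic is off, though it does not affect the argument: the row $\beta_1 = (|\alpha_1|-1,\dots,|\alpha_m|-1)$ never contributes at all, because every minor $|\hat A_\sigma|$ involving the column $(0,0)^T$ vanishes; the factors $1-\alpha_{k1}-\alpha_{j2}$ arise from combining the simplex-pair term (the ``$1$'') with the mixed-pair terms (the ``$-\alpha_{k1}-\alpha_{j2}$''), not from $\beta_1$, $\beta_2$, $\beta_3$ jointly as you predicted.
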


\begin{proof}
By Theorem~\ref{thm:RetsrictionOfPToHyperplane} it suffices to consider the case $m=2$,
which is a straightforward computation.
\end{proof}

Let us introduce notation for the following $k\times k$-minor of the matrix $Q$
from Lemma~\ref{lem:BivariateQuadratixFormMatrix}:
\[
G_k\,\Bigg(\begin{matrix} \alpha_1, \dots, \alpha_k \\ \delta_1, \dots, \delta_k \end{matrix}\Bigg)
 = \left| \begin{array}{ccc}
 g(\alpha_1, \delta_1) & \cdots &  g(\alpha_1, \delta_k)\\
 \vdots & \ddots & \vdots\\
  g(\alpha_k, \delta_1) & \cdots &  g(\alpha_k, \delta_k)
  \end{array}\right|.
\]
From this point on, proving the above statements is a matter of endurance during
computations. We avoid most details in this presentation.
We invite the reader to verify the following claims (preferrably using a cumputer).

\begin{lemma}
\label{lem:QuadraticG4}
The polynomial $G_k$ vanishes identically if\/ $k \geq 4$.
\end{lemma}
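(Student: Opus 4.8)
\textbf{The plan} is to show that the matrix $Q = \big(g(\alpha_k,\alpha_j)\big)$ built from the kernel function $g$ of Lemma~\ref{lem:BivariateQuadratixFormMatrix} has rank at most $3$, which immediately forces every $k\times k$ minor $G_k$ with $k \geq 4$ to vanish identically. The natural route is to exhibit an explicit factorization of the bilinear form $g(\alpha,\delta)$ as a sum of a bounded number of products of functions in $\alpha$ alone times functions in $\delta$ alone; each such product contributes a rank-one term to $Q$, so if $g$ decomposes into at most three rank-one pieces, then $\operatorname{rank}(Q) \leq 3$.

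First I would expand the defining expression for $g(\alpha_k,\alpha_j)$ and sort the monomials according to how they separate into an $\alpha_k$-part and an $\alpha_j$-part. Writing $\alpha = (a_1,a_2)$ and $\delta = (d_1,d_2)$ for readability, the three summands
\[
a_1 d_2(1 - a_1 - d_2), \quad a_2 d_1(1 - a_2 - d_1), \quad (a_1 d_2 - a_2 d_1)^2
\]
must be reorganized. The cross-square $(a_1 d_2 - a_2 d_1)^2 = a_1^2 d_2^2 - 2 a_1 a_2 d_1 d_2 + a_2^2 d_1^2$ already splits as a sum of products of monomials in $\alpha$ with monomials in $\delta$; the linear-in-each terms like $a_1 d_2$ and the cubic terms like $-a_1^2 d_2$ and $-a_1 d_2^2$ do likewise. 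The goal is to collect all resulting separable pieces and show that, as vectors indexed by $\alpha$, the $\alpha$-factors appearing span a space of dimension at most $3$ (equivalently, that $g(\alpha,\delta) = \sum_{i=1}^{3} u_i(\alpha)\, v_i(\delta)$ for suitable functions $u_i, v_i$, after possibly symmetrizing). Concretely, I expect the separable decomposition to involve the feature vector
\[
\phi(\alpha) = \big(a_1 a_2,\ a_1 a_2(a_1+a_2) - \cdots,\ \dots\big)
\]
whose components are a handful of low-degree monomials in $a_1, a_2$; the claim is that only three independent combinations survive once the symmetry $g(\alpha,\delta) = g(\delta,\alpha)$ (visible from the defining formula) is exploited.

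\textbf{The main obstacle} will be the bookkeeping required to verify that the number of independent rank-one contributions is exactly three and not four or more. The expression for $g$ is a symmetric biquadratic form, and a naive count of separable monomial-pairs gives more than three; the reduction to rank $3$ relies on nontrivial cancellations among the $(1 - a_1 - d_2)$-type factors and the cross-square, which is precisely the arithmetic the authors flag as ``a matter of endurance.'' The cleanest way to organize this is to fix an explicit $3$-dimensional feature map $\phi$ and a symmetric $3\times 3$ matrix $M$ and verify the identity $g(\alpha,\delta) = \phi(\alpha)^T M\, \phi(\delta)$ by direct expansion; once that identity holds, we have $Q = \Phi^T M \Phi$ where $\Phi$ is the $3\times m$ matrix with columns $\phi(\alpha_k)$, whence $\operatorname{rank}(Q) \leq 3$ and therefore $G_k \equiv 0$ for all $k \geq 4$. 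I would present the explicit $\phi$ and $M$, delegate the expansion to a symbolic computation (as the authors invite), and conclude by the rank bound.
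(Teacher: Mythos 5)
Your approach is correct and genuinely different from the paper's. The paper proves the lemma by brute force: the case $k=4$ is checked by direct (computer) expansion of the $4\times4$ determinant, and $k>4$ then follows by Laplace expansion; no structural reason for the vanishing is given. Your kernel-factorization plan works, and the cancellation you flag as the main obstacle resolves cleanly: writing $\alpha=(a_1,a_2)$ and $\delta=(d_1,d_2)$, the terms of $2\,g(\alpha,\delta)$ involving the pair $(a_1,d_2)$ regroup as $a_1d_2-a_1^2d_2-a_1d_2^2+a_1^2d_2^2=a_1(1-a_1)\,d_2(1-d_2)$, and symmetrically for $(a_2,d_1)$, so that
\[
2\,g(\alpha,\delta)=a_1(1-a_1)\,d_2(1-d_2)+a_2(1-a_2)\,d_1(1-d_1)-2\,a_1a_2\,d_1d_2,
\]
which is exactly your identity $g(\alpha,\delta)=\phi(\alpha)^T M\,\phi(\delta)$ with
\[
\phi(\alpha)=\big(a_1(1-a_1),\ a_2(1-a_2),\ a_1a_2\big),
\qquad
M=\frac12\left(\begin{array}{rrr} 0 & 1 & 0\\ 1 & 0 & 0\\ 0 & 0 & -2 \end{array}\right).
\]
One small correction to your write-up: $G_k$ as defined in the paper has independent arguments $\alpha_1,\dots,\alpha_k$ and $\delta_1,\dots,\delta_k$, so you should conclude not via the symmetric matrix $Q$ but via $\big(g(\alpha_i,\delta_j)\big)_{i,j}=\Phi_\alpha^T M\,\Phi_\delta$, where $\Phi_\alpha$ and $\Phi_\delta$ are the $3\times k$ matrices with columns $\phi(\alpha_i)$ and $\phi(\delta_j)$ respectively; this product of matrices of rank at most $3$ has vanishing determinant for every $k\geq4$, identically in all the variables, which is the full statement. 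Comparing the two routes: yours handles all $k\geq4$ at once with no Laplace step, replaces a massive determinant expansion by a three-term identity checkable by hand, and exposes the structural reason for the rank-$3$ bound that the paper subsequently exploits in the proof of Theorem~\ref{thm:QuadraticFormRank}; the paper's computation buys only brevity of exposition, at the cost of hiding why the statement is true.
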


\begin{proof}
The case $k> 4$ follows from the case $k=4$ by a Laplace expansion.
The case $k=4$ is a straightforward computation.
\end{proof}

Thus, only the polynomials $G_2$ and $G_3$ are relevant for our investigation.
The polynomial $G_2$ has,
when expanded, $96$ terms. 
The polynomial $G_3$ is simpler; it admits a factorization
\begin{equation}
\label{eqn:G3}
G_3\,\Bigg(\begin{matrix} \alpha_1, \alpha_2, \alpha_3 \\ \delta_1, \delta_2, \delta_3 \end{matrix}\Bigg)
 = \frac 1 4 \,H(\alpha_1, \alpha_2, \alpha_3) \cdot H(\delta_1, \delta_2, \delta_3)
\end{equation}
where $H(\alpha_1, \alpha_2, \alpha_3)$ is the following polynomial with (when expanded) 24 terms:
\begin{align*}
H(\alpha_1, \alpha_2, \alpha_3)  &
= \alpha_{11} \alpha_{12} \alpha_{22}\alpha_{31}  (1 - \alpha_{22}) (1 - \alpha_{31})
 -  \alpha_{11} \alpha_{12} \alpha_{21} \alpha_{32} (1 - \alpha_{21})(1 - \alpha_{32})\\
&  +   \alpha_{12} \alpha_{21}\alpha_{31} \alpha_{32}(1 - \alpha_{12})  (1 - \alpha_{21} ) 
     -    \alpha_{12} \alpha_{21} \alpha_{22} \alpha_{31} (1 - \alpha_{12})(1 - \alpha_{31})\\   
&   -   \alpha_{11} \alpha_{22} \alpha_{31} \alpha_{32} (1 - \alpha_{11}) (1 - \alpha_{22})
   +    \alpha_{11}\alpha_{21} \alpha_{22} \alpha_{32}  (1 - \alpha_{11}) (1 - \alpha_{32}).
\end{align*}

\begin{lemma}
\label{lem:QuadraticParabola}
Assume that $0, e_1, e_2, \alpha_1$, and $\alpha_2$ are five distinct points in $\RR^2$.
Then, the polynomial $G_2$ vanishes for $\delta_1 = \alpha_1$ and $\delta_2 = \alpha_2$
if and only if there is a parabola containing the five points.
\end{lemma}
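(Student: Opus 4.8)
The plan is to interpret the vanishing of $G_2$ as the condition that the $2\times 2$ minor of the quadratic-form matrix $Q$ drop rank, and to relate this geometrically to the five points $0$, $e_1$, $e_2$, $\alpha_1$, $\alpha_2$ lying on a common conic of parabolic type. Recall that a conic in $\RR^2$ is the zero set of a general quadratic $q(x,y)=ax^2+bxy+cy^2+dx+ey+f$; requiring it to pass through five given points imposes five linear conditions on the six coefficients $(a,\dots,f)$, so there is a (generically unique) conic through five points, and it is a \emph{parabola} precisely when its quadratic part is a perfect square, i.e. $b^2-4ac=0$. So the statement I want is: $G_2$ vanishes at $\delta_1=\alpha_1,\delta_2=\alpha_2$ if and only if the unique conic through the five points is degenerate in the parabolic sense.

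First I would set up the explicit $5\times 6$ matrix $C$ whose rows encode the evaluation of the monomial vector $(x^2,xy,y^2,x,y,1)$ at each of the five points; passing through all five points means the coefficient vector lies in $\ker C$. For the base points this is very clean: evaluating at $0$ kills every coefficient but $f$, forcing $f=0$; evaluating at $e_1=(1,0)$ and $e_2=(0,1)$ then forces $a+d=0$ and $c+e=0$. Thus after using the three base points the conic reduces to $q(x,y)=ax^2+bxy+cy^2-ax-cy$, with three remaining coefficients $(a,b,c)$ and two remaining conditions coming from $\alpha_1$ and $\alpha_2$. The parabola condition $b^2=4ac$ is what I must match against $G_2=0$.

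The key computational step is to show that, after imposing $q(\alpha_1)=0$ and $q(\alpha_2)=0$, the discriminant $b^2-4ac$ of the resulting conic is proportional (by a nonzero factor on the stated open locus where the five points are distinct) to $G_2\big(\begin{smallmatrix}\alpha_1,\alpha_2\\\alpha_1,\alpha_2\end{smallmatrix}\big)$. Concretely I would solve the $2\times 2$ linear system $q(\alpha_i)=0$ for, say, $a$ and $c$ in terms of $b$ (this is where I use that the two rows are independent, i.e. the points are in sufficiently general position), substitute into $b^2-4ac$, and compare the resulting expression in the $\alpha_{ij}$ with the explicit entries $g(\alpha_k,\alpha_j)$ of $Q$ from Lemma~\ref{lem:BivariateQuadratixFormMatrix}. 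The function $g$ already has the shape $\tfrac12(\alpha_{k1}\alpha_{j2}(1-\alpha_{k1}-\alpha_{j2})+\cdots)$ that one expects from such evaluations, so I anticipate a direct (if tedious) algebraic identity.

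The main obstacle will be bookkeeping in this final identification: $G_2$ expands into $96$ terms, and matching it term-by-term against the discriminant of the eliminated conic is exactly the kind of computation the authors flag as ``a matter of endurance.'' I would therefore organize the comparison by first observing that both sides are polynomials of the same bidegree in $(\alpha_1,\alpha_2)$, symmetric under the simultaneous swaps $\alpha_1\leftrightarrow\alpha_2$ and $\alpha_{k1}\leftrightarrow\alpha_{k2}$ (reflecting the symmetry of the base frame $e_1,e_2$), and then verify the identity on this reduced set of monomials, ideally delegating the final expansion to a computer algebra check as the authors suggest. The only genuine mathematical care needed beyond the algebra is to confirm that the proportionality factor is nonzero exactly on the locus where the five points are distinct, so that the biconditional is not spoiled by a spurious common factor; this I would handle by exhibiting that the factor is a product of the pairwise-difference-type quantities that vanish precisely when two of the five points coincide.
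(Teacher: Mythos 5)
Your proposal is correct in substance, but it takes a genuinely different route from the paper. The paper never constructs the conic through all five points: instead it parametrizes the (at most two) parabolas through the four points $0, e_1, e_2, \alpha_1$, writing a parabola through the coordinate frame as $a^2x_1^2+2abx_1x_2+b^2x_2^2-a^2x_1-b^2x_2$, so that passage through $\alpha_1$ becomes a quadratic equation in $[a:b]$ with roots $\big[\,\alpha_{11}\alpha_{12}\pm\sqrt{\alpha_{11}\alpha_{12}(\alpha_{11}+\alpha_{12}-1)}\,:\,\alpha_{11}(1-\alpha_{11})\,\big]$, and then verifies the single identity $P_1(\alpha_2)P_2(\alpha_2)=4\alpha_{11}^2(1-\alpha_{11})^2\,G_2$; the square root disappears because the product over the two (possibly complex-conjugate) parabolas is Galois-symmetric. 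Your route -- the unique conic through all five points, followed by the parabola condition $b^2-4ac=0$ -- leads to a different identity, and in one respect a cleaner one: if you normalize the solution of the linear system by Cramer's rule, i.e.\ take $(a,b,c)$ to be the cross product of the rows $r_i=(\alpha_{i1}^2-\alpha_{i1},\,\alpha_{i1}\alpha_{i2},\,\alpha_{i2}^2-\alpha_{i2})$, then one finds $b^2-4ac=-4\,G_2$ identically, with an \emph{absolute constant} $-4$. So your worry about a spurious proportionality factor (and your plan to exhibit it as a product of pairwise differences) is moot, and you also avoid the paper's case split on $\alpha_{11}(1-\alpha_{11})\neq 0$, which is forced on the paper exactly because its constant is $4\alpha_{11}^2(1-\alpha_{11})^2$. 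For the same reason, do not solve for $(a,c)$ in terms of $b$: that requires an extra nondegeneracy hypothesis and introduces denominators; work with the kernel vector of $2\times 2$ minors instead.

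The one genuine soft spot is your reliance on ``the (generically unique) conic through five points.'' Five distinct points can impose dependent conditions; this happens exactly when four of them are collinear, e.g.\ $0, e_1, e_2, (2,-1), (\tfrac12,\tfrac12)$, where the last four lie on $x_1+x_2=1$. Then $r_1\parallel r_2$, the system cannot be solved for two coefficients in terms of the third, and the conics through the five points form a pencil $L\cdot M$, with $L$ the common line and $M$ ranging over lines through the fifth point. Your biconditional needs a separate sentence here: the cross-product form of the identity gives $G_2=0$ for free (both sides vanish), and a parabola through the five points always exists in the pencil (take $M$ to be the line through the fifth point parallel to $L$, yielding a degenerate parabola), so the lemma survives -- but your argument as written, which presupposes a unique conic and an invertible $2\times 2$ system, does not cover this configuration. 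With that patch your proof is complete, and no less rigorous than the paper's own, which dismisses its degenerate cases $\alpha_{11}\in\{0,1\}$ as ``analogous.''
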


\begin{proof}
We assume that $\alpha_{11}(1-\alpha_{11}) \neq 0$, as this is the most difficult case.
The cases $\alpha_{11}=0$ or $\alpha_{11} = 1$ can be treated in analogous fashion.
The general equation of a parabola passing through the points $0, e_1$, and $e_2$ is
$P(x) = 0$ where 
\begin{equation}
\label{eqn:ParabolaLemma}
P(x) = a^2 x_1^2 + 2 a b x_1 x_2 + b^2 x_2^2 - a^2 x_1 -b^2x_2.
\end{equation}
Requiring in addition that the parabola passes through $\alpha_1$ gives that, in
projective coordinates
\begin{equation}
\label{eqn:FourPointsParabola}
[\, a \, : \, b \, ] = \big[\, \alpha_{11}\alpha_{12} \pm \sqrt{\alpha_{11}\alpha_{12}(\alpha_{11} + \alpha_{12}  -1)}\, : \, \alpha_{11} ( 1- \alpha_{11}) \big].
\end{equation}
(Recall that we assume $\alpha_{11} ( 1- \alpha_{11}) \neq 0$.)
Let $P_1(x)$ and $P_2(x)$ denote the two parabolic equations obtained 
from the possible choices of signs.
We leave it to the reader to verify that
\[
P_1(\alpha_2)P_2(\alpha_2) = 
4\, \alpha_{11}^2(1-\alpha_{11})^2\,  G_2\,\Bigg(\begin{matrix} \alpha_1, \alpha_2 \\ \alpha_1, \alpha_2 \end{matrix}\Bigg).\qedhere
\]
\end{proof}

\begin{remark}
\label{rem:G2}
Assume that $A$, written in the form \eqref{eqn:QuadraticA}, is contained in a 
parabola $C$.
For any indices $i,j,$ and $k$ we obtain a quadratic polynomial
\[
R_{ijk}(x) = G_2\Bigg(\begin{matrix} \alpha_i, \alpha_j \\ \alpha_k,\,\, x \end{matrix}\Bigg).
\]

Let us assume that $R_{121}$ is non-trivial, implying that the points $0, e_1, e_2, \alpha_1$,
and $\alpha_2$ are in general position (in the sense that there is a unique conic passing through them).
By Lemma~\ref{lem:QuadraticParabola}, $R_{121}(x)$ vanishes for $x = \alpha_2$. 
It is straightforward to verify that $R_{121}(x)$ also vanishes for 
$x = 0, e_1, e_2$, and $\alpha_1$. Hence, $R_{121}(x)$ defines the parabola $C$.
In particular, $R_{121}(x)$ vanishes for all $\alpha \in A$.

Assume now that $A$ has at least six points, and in addition that $R_{123}(x)$
is nontrivial.
We have that $R_{123}(x)$ vanishes at $x = \alpha_2$ and at $x = \alpha_3$,
as it coincides (up to a constant) with $P_{232}(\alpha_1)$ respectively $P_{323}(\alpha_1)$ for those values.
It is straightforward to check that $R_{123}(x)$ also vanishes for $x = 0, e_1,$ and $e_2$.
Hence, also $R_{123}(x)$ defines the parabola $C$.
In particular, $R_{123}(x)$ vanishes for all $\alpha \in A$.
\end{remark}

\begin{lemma}
\label{lem:QuadraticConic}
Assume that\/ $0, e_1, e_2, \alpha_1$, $\alpha_2$, and $\alpha_3$ are six distinct points in\/ $\RR^2$.
Then, the polynomial $H(\alpha_1, \alpha_2, \alpha_3)$ from \eqref{eqn:G3} vanishes
if and only if there is a conic containing all six points.
\end{lemma}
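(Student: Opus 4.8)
The plan is to prove that $H(\alpha_1,\alpha_2,\alpha_3)$ vanishes exactly when the six points $0, e_1, e_2, \alpha_1, \alpha_2, \alpha_3$ lie on a common conic. The natural tool is the classical determinantal criterion: six points in $\RR^2$ lie on a conic if and only if the $6\times 6$ matrix whose rows are $(x_1^2, x_1 x_2, x_2^2, x_1, x_2, 1)$ evaluated at the six points is singular. So I would first write down this conic determinant $D$ explicitly for our six points, using the fact that three of them are the standard points $0, e_1, e_2$, which kills many entries and simplifies the matrix substantially.

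\medskip
First I would exploit the special structure of $0, e_1, e_2$. Plugging these into the row vector $(x_1^2, x_1 x_2, x_2^2, x_1, x_2, 1)$ gives three very sparse rows: $(0,0,0,0,0,1)$ for the origin, $(1,0,0,1,0,1)$ for $e_1$, and $(0,0,1,0,1,1)$ for $e_2$. Using these to perform Laplace expansion (or row reduction) collapses $D$ to a $3\times 3$ determinant in the remaining three points $\alpha_1,\alpha_2,\alpha_3$. The entries of this reduced determinant will be the quadratic expressions $\alpha_{i1}\alpha_{i2}$, $\alpha_{i1}(1-\alpha_{i1})$, and $\alpha_{i2}(1-\alpha_{i2})$ — the same building blocks appearing in $H$. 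The second step is then a direct algebraic comparison: expand the reduced $3\times 3$ determinant and match it, up to a nonzero constant, with the six-term expression for $H(\alpha_1,\alpha_2,\alpha_3)$ given in \eqref{eqn:G3}.

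\medskip
For the reverse packaging, I would connect $H$ back to the minor $G_3$ via the factorization \eqref{eqn:G3}, but for this particular lemma the cleanest route is the direct identification $H = c\cdot D$ for a nonzero constant $c$, so that $H=0 \iff D=0 \iff$ the six points are conconic. Since the six points are assumed distinct, the standard conic-determinant criterion applies without degeneration caveats, and the equivalence follows immediately once the constant $c$ is checked to be nonzero.

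\medskip
The main obstacle I anticipate is purely computational bookkeeping: correctly carrying out the Laplace expansion that reduces the $6\times 6$ conic determinant to a $3\times 3$ determinant, and then verifying that the resulting polynomial agrees with $H$ up to sign and a nonzero scalar. The six terms of $H$ each factor as a product of four coordinate entries times two factors of the form $(1-\alpha_{ij})$, and matching these signs term-by-term against the expanded determinant is error-prone. I would organize this by first confirming the reduced determinant has exactly the $3!=6$ permutation terms, then matching each permutation to the correspondingly-signed term of $H$; a symbolic computation provides the final confirmation that $c\neq 0$.
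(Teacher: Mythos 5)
Your proposal is correct, and the identification it hinges on does close: after clearing the three sparse rows coming from $0$, $e_1$, $e_2$ (row operations plus Laplace expansion, which introduce no extra factors), the $6\times 6$ conic determinant collapses to the $3\times 3$ determinant whose $i$th row is $\big(\alpha_{i1}\alpha_{i2},\ \alpha_{i1}(1-\alpha_{i1}),\ \alpha_{i2}(1-\alpha_{i2})\big)$, and expanding this and matching the six permutation terms against the six terms of $H$ gives exactly $H(\alpha_1,\alpha_2,\alpha_3)$ up to an overall sign, so your constant is $c=\pm 1$. This is a genuinely different route from the paper's, though the two converge on the same $3\times 3$ identity. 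The paper writes the general conic through $0,e_1,e_2$ as $P(x)=ax_1^2+bx_1x_2+cx_2^2-ax_1-cx_2$, solves the two linear conditions $P(\alpha_1)=P(\alpha_2)=0$ for $[a:b:c]$ (the coefficients it lists are precisely the $2\times 2$ minors of the first two rows of your reduced matrix), and then verifies $P(\alpha_3)=H(\alpha_1,\alpha_2,\alpha_3)$; in other words, the paper obtains your determinant by cofactor expansion along the third row rather than by reducing the $6\times 6$ criterion. What your route buys is uniformity: the ``if and only if'' is immediate linear algebra, valid even when the five points $0,e_1,e_2,\alpha_1,\alpha_2$ fail to determine a unique conic --- in that degenerate situation the paper's displayed $(a,b,c)$ vanishes identically and its argument needs a (suppressed) separate remark that a conic through all six points still exists, which the determinantal criterion absorbs automatically since it only asserts existence of a nonzero, possibly degenerate, conic. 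What the paper's route buys is brevity and explicitness: it produces the actual conic coefficients, in the same computational style as Lemma~\ref{lem:QuadraticParabola} and Remark~\ref{rem:G2}, and the final check is a single polynomial evaluation rather than the bookkeeping-heavy reduction of a $6\times 6$ determinant. Both arguments, as the statement requires, allow the conic to be degenerate.
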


\begin{proof}
The general equation of a conic passing through the points $0, e_1$, and $e_2$ is
$P(x) = 0$ where 
\begin{equation}
\label{eqn:ParabolaLemma}
P(x) = a x_1^2 + b x_1 x_2 + c x_2^2 -a x_1 -cx_2
\end{equation}
Requiring in addition that the conic passes through $\alpha_1$ and $\alpha_2$
gives the coefficients, up to multiplication by a constant,
\[
\left\{\begin{array}{lll}
a & = &-\alpha_{12} \alpha_{22} (\alpha_{11} - \alpha_{21} + \alpha_{12} \alpha_{21} - \alpha_{11}\alpha_{22})\\
b & = &\alpha_{12} \alpha_{21}(1-\alpha_{12})(1-\alpha_{21}) 
 - \alpha_{11} \alpha_{22}(1-\alpha_{11})(1-\alpha_{22})\\
c & =  &\alpha_{11} \alpha_{21} (\alpha_{12} - \alpha_{12} \alpha_{21} - \alpha_{22} + \alpha_{11} \alpha_{22})
\end{array}\right.
\]
We leave it for the reader to verify that, with these coefficients, the polynomia $P(x)$
evaluated at $x = \alpha_{3}$ is equal to
$H(\alpha_1, \alpha_2, \alpha_3)$.
\end{proof}

\begin{proof}[Proof of Theorem~\ref{thm:QuadraticFormRank}]
It is well known that for $n=2$ the point configuration $A$ is dual defective if and only $A$ is a pyramid.
This proves part \eqref{it:i} of the theorem.

If the codimension $m=1$, then $A$ consists of four points in the plane, through which
two parabolas pass. It can be seen from \eqref{eqn:FourPointsParabola} that the limiting case
between real and non-real parabolas is the case of a pyramid.
Thus, this case follows from Example~\ref{ex:PinCodimensionOne}.

We now assume that $m > 1$. 
Classical geometry says that there is a unique conic passing through five points in generic
position (i.e., no four are colinear) in the plane. We allow the conic to be degenerate.
Assuming that $A$ is not a pyramid, it has a subconfiguration $A_1$
of five points in generic position.
Let $A_1$ constitute the first five columns of \eqref{eqn:QuadraticA}.

Let us first prove the relaxed statement, where we only consider the rank of
the cuspidal form $P_A(t)$. It follows from Lemma~\ref{lem:QuadraticG4} that the rank is at most three. 

If the rank is at most two, then by Lemma~\ref{lem:QuadraticConic} any choice of
six points of $A$ is contained in a conic. However, the five points of $A_1$
determine a unique conic $C$. By adjoining the remaining points one-by-one,
we conclude that $A$ is contained in the conic $C$. Conversely, if $A$ is contained in 
a conic, then $G_3$ vanishes for all $\alpha$ and $\delta$ by \eqref{eqn:G3}.

If the rank is at most one, then by Lemma~\ref{lem:QuadraticParabola} any choice of five points
is contained in a parabola. In particular, the five points of $A_1$ are contained in 
a parabola $C$. Let $\alpha$ be an additional point. Then, by Lemma~\ref{lem:QuadraticConic},
$A_1 \cup \{\alpha\}$ is contained in a conic. But $C$ is the unique conic containing $A_1$,
so $\alpha \in C$. It follows that $A$ is contained in $C$. Conversely, if $A$ is
contained in a parabola, then $G_2$ vanishes for all $\alpha$ and $\delta$
by Lemma~\ref{lem:QuadraticParabola} and Remark~\ref{rem:G2}.

Let us now turn to the refined statement of Theorem~\ref{thm:QuadraticFormRank}
regarding signatures.
For each class in the above list, it suffices to consider the minimal $m$ such that there is a
point configuration in this class. Indeed, there is a ``minimal'' subconfiguration
witnessing the class containing $A$, and by Theorem~\ref{thm:RetsrictionOfPToHyperplane}
we can delete the remaining points without altering the rank.
In particular, parts \eqref{it:ii} and \eqref{it:iii} follows from the above discussion
on codimension $m=1$.

Any two minimal configurations of one class can be continuously deformed to each other
without leaving the class in question. Therefor, it suffices to consider one representative of each class.
Since a generic configuration can be continuously deformed to a configuration contained
either in an ellipse or in a parabola, part \eqref{it:vi} follows from parts \eqref{it:iv}  and \eqref{it:v}
and the fact that the rank is $3$ in the generic case.
Hence, we finish the proof with Examples \ref{ex:Hyperbola} and  \ref{ex:Parabola}.
\end{proof}

\begin{example}
\label{ex:Hyperbola}
Consider the point configuration and Gale dual
\[
A = \left( \begin{array}{rrrrr}
1 & 1 & 1 & 1 & 1\\
3 & -3 & 5 & 5 & -5\\
0 & 0 & 4 & -4 & 4
\end{array}\right)
\quad \text{and} \quad
B^T = \left(\begin{array}{rrrrr}
5 & -5 & -3 & 0 & 3 \\ -8 & 2 & 3 & 3 & 0 
\end{array}\right).
\]
We obtain the cuspidal form
$P_A(t) = -576(5t_1- 4t_2)(5t_1 +4 t_2)$,
which has signature $s_A = (1,1; 0)$.
\end{example}

\begin{example}
\label{ex:Parabola}
Consider the point configuration and Gale dual
\[
A = \left( \begin{array}{ccccc}
1 & 1 & 1 & 1 & 1\\
0 & 1 & 0 & 1 & 2\\
0 & 0 & 1 & 2 & 1
\end{array}\right)
\quad \text{and} \quad
B^T = \left(\begin{array}{rrrrr}
2 & -2 & -1 & 0 & 1 \\ 2 & -1 & -2 & 1 & 0 
\end{array}\right).
\]
We obtain the cuspidal form
$P_A(t) = -4(t_1^2 + t_1t_2 + t_2^2)$,
which has signature $s_A = (0,2; 0)$.
\end{example}


\bibliographystyle{amsplain}

\renewcommand{\bibname}{References} 


\end{document}